\title{Improving the Caro-Wei bound and applications to Tur\'{a}n stability}
\author{Tom Kelly%
  \thanks{
    School of Mathematics, Georgia Institute of Technology. 
    Email: \protect\href{mailto:tom.kelly@gatech.edu}{\protect\nolinkurl{tom.kelly@gatech.edu}}.
    Research
supported by the National Science Foundation under Grant No. DMS-224707
  }
  \and
  Luke Postle%
  \thanks{
    Department of Combinatorics and Optimization, University of Waterloo, Canada.
    Email: \protect\href{mailto:lpostle@uwaterloo.ca}{\protect\nolinkurl{lpostle@uwaterloo.ca}}.
    Partially supported by NSERC under Discovery Grant No.\ 2019-04304, the Ontario Early Researcher Awards program and the Canada Research Chairs program.
  }}
\begin{document}
\maketitle

\begin{abstract}
  We prove that if $G$ is a graph and $f(v) \leq 1/(d(v) + 1/2)$ for each $v\in V(G)$, then either $G$ has an independent set of size at least $\sum_{v\in V(G)}f(v)$ or $G$ contains a clique $K$ such that $\sum_{v\in K}f(v) > 1$.  This result implies that for any $\sigma \leq 1/2$, if $G$ is a graph and every clique $K\subseteq V(G)$ has at most $(1 - \sigma)(|K| - \sigma)$ simplicial vertices, then $\alpha(G) \geq \sum_{v\in V(G)} 1 / (d(v) + 1 - \sigma)$.  Letting $\sigma = 0$ implies the famous Caro-Wei Theorem, and letting $\sigma = 1/2$ implies that if fewer than half of the vertices in each clique of $G$ are simplicial, then $\alpha(G) \geq \sum_{v\in V(G)}1/(d(v) + 1/2)$, which is tight for the 5-cycle.  When applied to the complement of a graph, this result implies the following new Tur\' an stability result.  If $G$ is a $K_{r + 1}$-free graph with more than $(1 - 1/r)n^2/2 - n/4$ edges, then $G$ contains an independent set $I$ such that at least half of the vertices in $I$ are complete to $G - I$.  Applying this stability result iteratively provides a new proof of the stability version of Tur\' an's Theorem in which $K_{r + 1}$-free graphs with close to the extremal number of edges are $r$-partite. 
\end{abstract}

\section{Introduction}

For $n \geq r \geq 2$, the \textit{Tur\' an graph}, which we denote $T_r(n)$, is the complete $r$-partite graph on $n$ vertices whose part sizes differ by at most one.  Tur\' an's Theorem~\cite{T41}, the cornerstone of extremal graph theory, states that if $G$ is an $n$-vertex, $K_{r + 1}$-free graph, then $e(G) \leq e(T_r(n))$, with equality holding only when $G \cong T_r(n)$.  Since $e(T_r(n)) \leq (1 - 1/r)n^2/2$, Turan's Theorem is sometimes presented in the following slightly weaker form.  If $G$ is a $K_{r+1}$-free graph on $n$ vertices, then $G$ has at most $(1 - {1}/{r}){n^2}/{2}$ edges.  We refer to this latter statement as the \textit{concise Tur\'an's Theorem} (as in~\cite{N11}).

A stability version of Tur\' an's Theorem implies that if $G$ is an $n$-vertex $K_{r+1}$-free graph with close to $(1 - 1/r)n^2 / 2$ edges, then $G$ resembles the Tur\' an graph $T_r(n)$ in some sense.  For example, the classical Erd\H os--Simonovits~\cite{E66, E67, S66} stability theorem states that if $G$ has at least $e(T_r(n)) - o(n^2)$ edges, then the \textit{edit distance} from $G$ to $T_r(n)$ is $o(n^2)$.  That is, $G$ can be obtained from $T_r(n)$ by adding and deleting at most $o(n^2)$ edges.  Brouwer~\cite{Br81} proved that for $n \geq 2r + 1$, if $G$ has at least $e(T_r(n)) - \lfloor n/r\rfloor + 2$ edges, then $G$ is $r$-partite.  In this paper, we prove a new stability version of Tur\' an's Theorem that is qualitatively between these two.  Our result (Theorem~\ref{strong local turan stability}) implies that if $G$ has close to the extremal number of edges, then $G$ has an independent set $I$ which resembles one of the parts of the Tur\' an graph in that it contains many vertices that are complete to $G - I$.  We call this phenomenon ``Local Tur\' an stability.''

When applied to the complement of a graph, the concise Tur\' an's Theorem states that an $n$-vertex graph with less than $n^2/(2r) - n/2$ edges has an independent set of size $r + 1$ .  Equivalently, every graph with $n$ vertices and average degree $d$ has an independent set of size at least $n/(d + 1)$.  Independently, Caro~\cite{C79} and Wei~\cite{W81} famously generalized the concise Tur\'{a}n's Theorem by proving that every graph $G$ has an independent set of size at least $\sum_{v\in V(G)}1/(d(v) + 1)$, where $d(v)$ is the degree of the vertex $v$ in $G$.  This bound is stronger than Tur\'{a}n's because of the convexity of the function $x\mapsto 1/(x + 1)$.  Our main result in this paper, Theorem~\ref{main thm} (which implies Theorem~\ref{strong local turan stability}), is a best-possible improvement of the Caro-Wei bound.

\subsection{Local Tur\' an stability}

The following result is our new stability version of Tur\' an's Theorem.  We say that a vertex in a graph is \textit{complete} to a subset of vertices if the vertex is adjacent to every vertex in that set.

\begin{theorem}[Local Tur\'{a}n Stability]\label{strong local turan stability}
  Let $\sigma \leq 1/2$.  If $G$ is a $K_{r + 1}$-free graph on $n$ vertices with more than
  \begin{equation*}
    \left(1 - \frac{1}{r}\right)\frac{n^2}{2} - \frac{\sigma n}{2}
  \end{equation*}
  edges, then $G$ contains an independent set $I$ such that more than $(1 - \sigma)(|I| - \sigma)$ vertices in $I$ are complete to $G - I$.
\end{theorem}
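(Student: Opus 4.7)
The plan is to apply Theorem~\ref{main thm} to the complement $\bar G$ with the weight function $f(v) := 1/(d_{\bar G}(v) + 1 - \sigma) = 1/(n - d_G(v) - \sigma)$. Since $\sigma \le 1/2$ gives $1-\sigma \ge 1/2$, the required bound $f(v) \le 1/(d_{\bar G}(v)+1/2)$ holds, so Theorem~\ref{main thm} produces one of two alternatives; I will show that the edge hypothesis rules out the first, and that the second directly yields the desired independent set $I$.

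Suppose first that $\alpha(\bar G) \ge \sum_{v} f(v)$. Since $G$ is $K_{r+1}$-free, $\alpha(\bar G) = \omega(G) \le r$, so $\sum_{v} 1/(n - d_G(v) - \sigma) \le r$. The map $x \mapsto 1/(n-x-\sigma)$ is convex on $[0,n-1]$, so Jensen's inequality applied to the degree sequence of $G$ (whose mean is $2e(G)/n$) yields
\[
\sum_{v} \frac{1}{n - d_G(v) - \sigma} \;\ge\; \frac{n^{2}}{n^{2} - 2e(G) - \sigma n}.
\]
Combining these inequalities and rearranging gives $e(G) \le (1 - 1/r)n^{2}/2 - \sigma n/2$, contradicting the hypothesis.

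Hence some clique $K$ of $\bar G$ satisfies $\sum_{v\in K} f(v) > 1$. Extending $K$ to a maximal clique in $\bar G$ only increases this sum, so I assume $K$ is maximal. A vertex $v \in K$ is simplicial in $\bar G$ iff $N_{\bar G}(v)$ is a clique; since $K \setminus \{v\} \subseteq N_{\bar G}(v)$ and $K$ is maximal, this is equivalent to $N_{\bar G}(v) = K \setminus \{v\}$, in which case $f(v) = 1/(|K|-\sigma)$. A non-simplicial $v \in K$ has $N_{\bar G}(v) \supsetneq K \setminus \{v\}$, so $d_{\bar G}(v) \ge |K|$ and $f(v) \le 1/(|K|+1-\sigma)$. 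Letting $s$ be the number of simplicial vertices of $\bar G$ in $K$ and writing $a = |K|-\sigma$,
\[
1 \;<\; \sum_{v\in K} f(v) \;\le\; \frac{s}{a} + \frac{|K|-s}{a+1} \;=\; \frac{s + |K|a}{a(a+1)},
\]
and a short rearrangement shows this is equivalent to $s > (1-\sigma)(|K|-\sigma)$.

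To finish, take $I := K$, an independent set in $G$. For maximal $K$, a vertex $v \in I$ is complete in $G$ to $V(G) \setminus I$ precisely when $N_{\bar G}(v) = K \setminus \{v\}$, i.e.\ exactly when $v$ is simplicial in $\bar G$. Thus $I$ has $s > (1-\sigma)(|I| - \sigma)$ vertices complete to $G - I$, as required. The main obstacle I expect is the simplicial-vertex bookkeeping: the equivalence between ``simplicial in $\bar G$ inside $K$'' and ``vertex of $I=K$ complete in $G$ to $V(G) \setminus K$'' requires the maximality of $K$, and without exploiting this the clique produced by Theorem~\ref{main thm} would yield a strictly weaker bound on $|I|$ than the theorem demands.
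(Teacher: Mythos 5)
Your proof is correct, and it follows essentially the same route as the paper: apply Theorem~\ref{main thm} to $\bar G$ with $f(v) = 1/(n - d_G(v) - \sigma)$, use Jensen together with $\omega(G) \le r$ to rule out the first alternative, and read off the required independent set from the clique $K$ in $\bar G$ that violates $\sum_{v\in K} f(v) \le 1$. The computational core of your second paragraph (the bound $\sum_{v\in K} f(v) \le s/(|K|-\sigma) + (|K|-s)/(|K|+1-\sigma)$ and its rearrangement to $s > (1-\sigma)(|K|-\sigma)$) is exactly the contrapositive of the calculation the paper gives when deriving Theorem~\ref{technical independence brooks} from Theorem~\ref{main thm}. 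Where you diverge slightly is that you go straight from Theorem~\ref{main thm} to Theorem~\ref{strong local turan stability}, bypassing Theorem~\ref{technical independence brooks}, and your instinct to pass to a maximal clique $K$ is not idle bookkeeping but genuinely necessary: for a non-maximal clique $K$ in $\bar G$, a vertex $v \in K$ being simplicial in $\bar G$ (i.e.\ $N_{\bar G}(v)$ a clique) does not force $N_{\bar G}(v) = K\setminus\{v\}$, so the simplicial vertices produced by the negation of Theorem~\ref{technical independence brooks}'s hypothesis need not be complete to $G - K$. The paper's ``immediately implies'' glosses over this; your version, taking the clique from Theorem~\ref{main thm} and extending it to a maximal clique (which preserves $\sum f > 1$ since $f > 0$), closes that gap cleanly, because for a maximal clique simpliciality in $\bar G$ is equivalent to $N_{\bar G}(v) = K\setminus\{v\}$, i.e.\ to being complete to $G - K$.
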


Theorem~\ref{strong local turan stability} for $\sigma = 0$ implies the concise Tur\' an's Theorem; indeed, no graph contains an independent set $I$ with more than $|I|$ vertices in $I$, so by Theorem~\ref{strong local turan stability}, no $K_{r + 1}$-free $n$-vertex graph has more than $(1 - 1/r)n^2 / 2$ edges.  For other values of $\sigma$, the result suggests that a $K_{r+1}$-free graph with close to the extremal number of edges resembles the Tur\' an graph in that the independent set $I$ behaves like one of the parts of the Tur\' an graph.

Theorem~\ref{strong local turan stability} is essentially best possible for $r = 2$ in the sense that if $n$ is even and $\sigma n/2$ is an integer, then the complete bipartite graph on $n$ vertices with equal-sized parts and a matching of size $\sigma n/2$ removed has precisely $(1 - 1/r)n^2/2 - \sigma n/2$ edges and every independent set $I$ has at most $(1 - \sigma)|I|$ vertices that are complete to $G - I$ (to see this last fact, observe that if $I$ is an independent set with at least one vertex complete to $G - I$, then $I$ is one of the two parts of the bipartition).  Moreover, the statement of Theorem~\ref{strong local turan stability} is not true with $\sigma > 1/2$ because if $G$ is the 5-cycle, then $(1 - 1/r)n^2/2 - \sigma n/2 < 5$, but $G$ does not contain an independent set with any vertex complete to the rest of the graph.

In an application of Theorem~\ref{strong local turan stability}, the subgraph $G - I$ is $K_r$-free, so if we apply Theorem~\ref{strong local turan stability} iteratively, then we can show that $G$ globally resembles the Tur\' an graph in that it is actually $r$-partite, as follows.

\begin{corollary}\label{strong turan stability}
  If $G$ is a $K_{r + 1}$-free graph on $n$ vertices with at least
  \begin{equation*}
    \left(1 - \frac{1}{r}\right)\frac{n^2}{2} - \frac{n}{2r} + 1
  \end{equation*}
  edges, then $G$ is $r$-colorable.
\end{corollary}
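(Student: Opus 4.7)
The plan is to prove the corollary by induction on $r$, using Theorem~\ref{strong local turan stability} to peel off a color class at each step. The base case $r = 1$ is immediate, since a $K_2$-free graph is edgeless and hence $1$-colorable.

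For the inductive step with $r \geq 2$, I would apply Theorem~\ref{strong local turan stability} to $G$ with $\sigma = 1/r$. This is valid because $\sigma \leq 1/2$ and, by the corollary's hypothesis,
\begin{equation*}
  e(G) \;\geq\; \left(1 - \frac{1}{r}\right)\frac{n^2}{2} - \frac{n}{2r} + 1 \;>\; \left(1 - \frac{1}{r}\right)\frac{n^2}{2} - \frac{\sigma n}{2}.
\end{equation*}
The theorem then yields a nonempty independent set $I$ together with a subset $I' \subseteq I$ of vertices complete to $V(G) \setminus I$ satisfying $|I'| > (1 - 1/r)(|I| - 1/r)$; since $|I| \geq 1$, the right-hand side is positive, forcing $|I'| \geq 1$. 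Consequently, $G - I$ is $K_r$-free: any $K_r$ in $G - I$ together with a vertex of $I'$ would form a forbidden $K_{r+1}$ in $G$.

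To finish by induction I would verify that $G - I$ satisfies the corollary's edge bound with $r$ replaced by $r - 1$; the inductive hypothesis would then give an $(r-1)$-coloring of $G - I$, which together with $I$ as a single color class yields the desired $r$-coloring. Writing $k = |I|$ and $m = n - k$, and using $e(G - I) \geq e(G) - km$ (since each vertex of $I$ has at most $m$ neighbors outside $I$), the required inequality
\begin{equation*}
  e(G) - km \;\geq\; \left(1 - \frac{1}{r-1}\right)\frac{m^2}{2} - \frac{m}{2(r-1)} + 1
\end{equation*}
reduces, after substituting $n = k + m$ and clearing denominators, to
\begin{equation*}
  \bigl((r-1)k - m\bigr)\bigl((r-1)k - m - 1\bigr) \;\geq\; 0,
\end{equation*}
which holds because $(r-1)k - m$ is an integer and $x(x-1) \geq 0$ for every integer $x$. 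The main obstacle is this edge-count reduction: although the computation itself is routine, it crucially exploits integrality to rule out failure in the range $0 < x < 1$.
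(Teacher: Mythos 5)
Your proof is correct, and it takes a genuinely different route from the paper's. The paper applies Theorem~\ref{strong local turan stability} with the fixed value $\sigma = 1/2$, iteratively peels off a \emph{maximal} collection of independent sets $I_1,\dots,I_k$ each containing a vertex complete to the remainder, and then derives a contradiction by bounding $e(G)$ from above via three pieces (the contrapositive of the theorem on the residual graph, Tur\'an's theorem on $G[\cup_i I_i]$, and the trivial bound on cross edges) and solving a continuous optimization over the two parameters $n'$ and $k$ with partial derivatives. You instead run a clean induction on $r$, choose $\sigma = 1/r$ so that the corollary's ``$+1$'' slack is exactly what is needed to invoke the theorem, peel off a single independent set, and verify the inductive edge hypothesis by an exact algebraic identity: after clearing denominators the surplus is precisely $x(x-1)$ with $x = (r-1)|I| - |V(G)\setminus I|$ an integer (I checked the expansion; it is right). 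Your approach buys a shorter, self-contained argument that avoids both Tur\'an's theorem and the calculus, at the cost of leaning crucially on integrality to exclude $0 < x < 1$ — an observation the paper's continuous optimization does not need and could not use. One cosmetic point: you should note (as the paper implicitly does) that the hypothesis forces $e(G) \geq 1$, so $I$ is a proper subset of $V(G)$ and the theorem's conclusion genuinely yields a nonempty $I$ with at least one vertex complete to $G - I$; this is immediate but worth a sentence.
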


For completeness, we include a proof of Corollary~\ref{strong turan stability} in Appendix~\ref{strong turan section}.  The only complicated aspect of the proof is that it is necessary to solve a simple optimization problem to determine how many times Theorem~\ref{strong local turan stability} is iterated and how many vertices remain at the end of the process.

Considerable attention has been devoted to various aspects of Tur\' an stability for $K_{r+1}$-free graphs as the number of edges approaches the extremal value.  Theorem~\ref{strong local turan stability} and Corollary~\ref{strong turan stability} fit into this paradigm.  Table~\ref{phase-transition-table} illustrates this ``phase transition''\footnote{The results of~\cite{PSS18, TU15} apply to \textit{maximal} $K_{r+1}$-free graphs.}.
See also~\cite{F15, NR04}.

\bgroup
\def\arraystretch{1.25}%
\begin{table}
  \centering
  \begin{tabular}[t]{| c | c | c |}
    \hline
    $t$ & Property & Reference\\
    \hline
    $0$ & $G\cong T_r(n)$ & Tur\' an's Theorem~\cite{T41}\\
    \hline
    $\lfloor n/r\rfloor + 2$ & $\chi(G) \leq r$ & Brouwer~\cite{Br81}\\
    \hline
    $n/4$ & Local Tur\' an stability & Theorem~\ref{strong local turan stability} \\
    \hline
    $O(n \log n)$ & $\exists$ a pair of ``twin vertices'' & \cite{TU15}\\
    \hline
    $o(n^{(r+1)/r})$ & $\exists$ almost spanning complete $r$-partite subgraph & \cite{PSS18} \\
    \hline
    $o(n^2)$ & edit distance $o(n^2)$ to $T_r(n)$ & Erd\H os--Simonovits~\cite{E67, S66}\\
    \hline                   
  \end{tabular}
  \caption{``Phase transition'' of Tur\'an stability: $G$ is $K_{r + 1}$-free with $e(G) \geq (1 - \frac{1}{r})\frac{n^2}{2} - t$.}
  \label{phase-transition-table}
\end{table}
\egroup

As mentioned, Corollary~\ref{strong turan stability} was first proved by Brouwer~\cite{Br81} -- it has actually been reproved by various sets of authors (see~\cite{TU15}).  For $n \geq 2r + 1$ the result actually holds with the $-n/(2r) + 1$ term replaced with $- \lfloor n/r\rfloor + 2$.  We could possibly obtain this stronger form of Corollary~\ref{strong turan stability} via the same proof method with an extension of Theorem~\ref{strong local turan stability} with $\sigma \leq 1$ if we include an additive ``error term.''  More precisely, we conjecture that the conclusion of Theorem~\ref{strong local turan stability} still holds with $\sigma \leq 1$ if we replace ``$(1 - 1/r)n^2/2 - \sigma n/2$ edges'' with ``$(1 - 1/r)n^2/2 - \sigma n/2 + 5/4$ edges.''  Moreover, we believe it is possible to characterize the extremal examples which would in turn provide a new proof of a stronger form of Brouwer's~\cite{Br81} result also characterizing the extremal examples.  We believe the extremal examples for Theorem~\ref{strong local turan stability} are the same as those for Brouwer's~\cite{Br81} result for $r = 2$, which are particular blowups of 5-cycles -- see~\cite{TU15} for a complete description.  The complement of the graph on the left in Figure~\ref{fig:blowups} provides an example.  For $r \geq 3$, the extremal examples for Brouwer's result are obtained from an extremal example for $r - 1$ by adding an independent set complete to the rest of the graph, so these graphs satisfy Theorem~\ref{strong local turan stability} with $\sigma = 1$.  

\subsection{Improving the Caro-Wei bound}

The \textit{independence number} of a graph $G$, denoted $\alpha(G)$, is the size of a largest independent set in $G$.  As witnessed by Tur\' an's Theorem~\cite{T41} and the Caro-Wei Theorem~\cite{C79, W81}, some of the most fundamental results in graph theory include bounds on the independence number.  The bound on the independence number provided by the Caro-Wei Theorem is tight for disjoint unions of complete graphs, and the complement of the Tur\'{a}n graph is a disjoint union of complete graphs of order differing by at most 1.  Several improvements have been made to these bounds under certain additional assumptions, such as connectivity~\cite{BJ08, GV10, HR11, HS01, HS06} or triangle-freeness~\cite{AKS80, G83, Sh83, Sh91}, among other things~\cite{BGHR12, BR15, CRRS16, HM19, S94}.  Brooks' Theorem~\cite{B41} bounding the chromatic number implies that if $G$ is an $n$-vertex connected graph of maximum degree $\Delta$, then $\alpha(G) \geq n/\Delta$, unless $G$ is a complete graph or an odd cycle.  

In this paper, we examine the extent to which Brooks' bound holds for the average degree as in the concise Tur\' an's Theorem instead of the maximum degree, or for the degree-sequence as in the Caro-Wei Theorem.  In particular, for $\sigma \in [0, 1]$, when does a connected $n$-vertex graph $G$ of average degree $d$ satisfy $\alpha(G) \geq n/(d + 1 - \sigma)$ or even $\alpha(G) \geq \sum_{v\in V(G)}1/(d(v) + 1 - \sigma)$?  For any $\sigma > 0$, it is not sufficient as in Brooks' Theorem to simply require that $G$ is not complete  -- a disjoint union of complete graphs with the minimum number of edges added to ensure connectivity has independence number at most $n/(d + 1 - O(1/d))$.  However, the cliques in this graph have many simplicial vertices (a vertex is \textit{simplicial} if its neighborhood is a clique).  We prove that for $\sigma \leq 1/2$, such cliques are essentially the only obstruction, as follows.

\begin{theorem}\label{technical independence brooks}
  Let $\sigma \leq 1/2$.  If $G$ is a graph and each clique $K\subseteq V(G)$ has at most $(1 - \sigma)(|K| - \sigma)$ simplicial vertices, then
  \begin{equation*}
    \alpha(G) \geq \sum_{v\in V(G)}\frac{1}{d(v) + 1 - \sigma}.
  \end{equation*}
  In particular, if each clique $K\subseteq V(G)$ has fewer than $|K|/2$ simplicial vertices, then $\alpha(G) \geq \sum_{v\in V(G)}1/(d(v) + 1/2)$.  Consequently, if $G$ also has $n$ vertices and average degree $d$, then
  \begin{equation*}
    \alpha(G) \geq \frac{n}{d + 1/2}.
  \end{equation*}
\end{theorem}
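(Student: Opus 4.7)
The plan is to apply Theorem~\ref{main thm} with the weight function $f(v) = 1/(d(v) + 1 - \sigma)$. Since $\sigma \leq 1/2$ gives $1 - \sigma \geq 1/2$, we have $f(v) \leq 1/(d(v) + 1/2)$, so the hypothesis of Theorem~\ref{main thm} is met. That theorem then furnishes one of two alternatives: either $G$ has an independent set of size $\sum_{v \in V(G)} f(v) = \sum_{v} 1/(d(v) + 1 - \sigma)$, which is exactly the desired bound; or there is a clique $K \subseteq V(G)$ with $\sum_{v \in K} f(v) > 1$. The rest of the argument rules out the second alternative.

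Let $k = |K|$ and let $s$ be the number of simplicial vertices of $G$ lying in $K$. For every $v \in K$ the set $K \setminus \{v\}$ is a clique contained in $N(v)$, so $d(v) \geq k - 1$ always; moreover, if $v$ is non-simplicial then $N(v)$ must strictly contain $K \setminus \{v\}$ (otherwise $N(v)$ itself would be a clique), so $d(v) \geq k$. Combining these bounds yields
\[
  \sum_{v \in K} \frac{1}{d(v) + 1 - \sigma} \;\leq\; \frac{s}{k - \sigma} + \frac{k - s}{k + 1 - \sigma}.
\]
A short algebraic manipulation shows that the right-hand side is at most $1$ precisely when $s \leq (1 - \sigma)(k - \sigma)$, which is exactly the hypothesis on $K$. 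Hence $\sum_{v \in K} f(v) \leq 1$, contradicting the second alternative and giving $\alpha(G) \geq \sum_{v \in V(G)} 1/(d(v) + 1 - \sigma)$.

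For the ``in particular'' clause, I would set $\sigma = 1/2$ and observe that $(1-\sigma)(|K|-\sigma) = |K|/2 - 1/4$, which, for the integer count $s$ of simplicial vertices, is equivalent to $s < |K|/2$. The final inequality $\alpha(G) \geq n/(d + 1/2)$ then follows by Jensen's inequality applied to the convex function $x \mapsto 1/(x + 1/2)$, using that the average of $d(v)$ over $v\in V(G)$ equals $d$.

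The only substantive content beyond invoking Theorem~\ref{main thm} is the short computation that identifies $s \leq (1-\sigma)(k - \sigma)$ as precisely the threshold at which the clique-sum bound fails; this is what makes the somewhat unusual form of the simplicial hypothesis appear natural, and it also explains why the restriction $\sigma \leq 1/2$ is consistent with the $1/(d(v) + 1/2)$ hypothesis of Theorem~\ref{main thm}. Everything else is bookkeeping, so the real obstacle has been packaged into the proof of Theorem~\ref{main thm}.
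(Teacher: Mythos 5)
Your proposal is correct and follows essentially the same route as the paper: apply Theorem~\ref{main thm} with $f(v) = 1/(d(v)+1-\sigma)$, bound each simplicial vertex's contribution by $1/(|K|-\sigma)$ and each non-simplicial one's by $1/(|K|+1-\sigma)$, and verify that the simplicial-vertex hypothesis is exactly the threshold for $\sum_{v\in K}f(v)\leq 1$. The only cosmetic difference is that you invoke Theorem~\ref{main thm} in its contrapositive form (as stated in the abstract) and you spell out explicitly why non-simplicial vertices of $K$ satisfy $d(v)\geq|K|$, which the paper leaves implicit.
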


Notice that the Caro-Wei Theorem follows from Theorem~\ref{technical independence brooks} with $\sigma = 0$.  For graphs without even a single simplicial vertex, no bound on $\alpha(G)$ of the form $n / (d + 1 - \varepsilon)$ was previously known for any $\varepsilon > 0$.  Theorem~\ref{technical independence brooks} implies this result for $\varepsilon = 1/2$, and it is tight for the 5-cycle.  Theorem~\ref{technical independence brooks} also immediately implies Theorem~\ref{strong local turan stability}.

Our main result is actually the following generalization of Theorem~\ref{technical independence brooks}\footnote{This result follows from a previous result of both authors~\cite[Theorem~1.3]{KP18} on fractional coloring, but that paper is currently not under review.  In this paper, we provide a much shorter proof of our main result.}.

\begin{theorem}\label{main thm}
  If $G$ is a graph and $f : V(G) \rightarrow \mathbb R$ such that $f(v) \leq 1/(d(v) + 1/2)$ for each $v\in V(G)$ and $\sum_{v\in K} f(v) \leq 1$ for each clique $K\subseteq V(G)$, then $\alpha(G) \geq \sum_{v\in V(G)} f(v)$.
\end{theorem}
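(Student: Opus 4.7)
The plan is to prove the theorem by induction on $|V(G)|$, with the trivial base case $|V(G)|=0$.  For the inductive step I distinguish two cases based on whether some vertex admits a ``safe'' inclusion into the desired independent set.

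\textit{Case A: there exists $v\in V(G)$ with $\sum_{u\in N[v]}f(u)\le 1$.}  Then the restriction of $f$ to $V(G)\setminus N[v]$ satisfies the hypotheses of the theorem on $G-N[v]$: degrees can only decrease, and cliques of $G-N[v]$ are cliques of $G$.  By induction, there is an independent set $I'\subseteq V(G)\setminus N[v]$ with $|I'|\ge \sum_{u\notin N[v]}f(u)$, and $I:=\{v\}\cup I'$ is independent in $G$ with $|I|\ge 1+\sum_{u\notin N[v]}f(u)\ge \sum_u f(u)$.

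\textit{Case B: $\sum_{u\in N[v]}f(u)>1$ for every $v\in V(G)$.}  I would pick a vertex $v$ of minimum degree, exclude it, and redistribute its weight $f(v)$ across $N(v)$.  Concretely, seek nonnegative $\delta_u$ for $u\in N(v)$ with $\sum_u \delta_u=f(v)$, and define $f'$ on $V(G)\setminus\{v\}$ by $f'(u)=f(u)+\delta_u$ on $N(v)$ and $f'(u)=f(u)$ otherwise.  If $f'$ satisfies the hypotheses of the theorem on $G-v$---namely $\delta_u\le 1/(d_G(u)-\tfrac12)-f(u)$ (since $d_{G-v}(u)=d_G(u)-1$ for $u\in N(v)$) and $\sum_{u\in K\cap N(v)}\delta_u\le 1-\sum_{u\in K}f(u)$ for each clique $K$ of $G-v$---then induction on $G-v$ yields an independent set $I$ of $G-v$ (hence of $G$) with $|I|\ge \sum_u f'(u)=\sum_u f(u)$.

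The main obstacle is verifying feasibility of this redistribution LP in Case~B.  The per-vertex slack is bounded below by $1/((d(u)-\tfrac12)(d(u)+\tfrac12))$, and summed over $u\in N(v)$ with $v$ of minimum degree it dominates $f(v)\le 1/(d(v)+\tfrac12)$.  For any clique $K\subseteq N(v)$ of $G-v$, the set $K\cup\{v\}$ is itself a clique of $G$, so the original constraint gives $\sum_{u\in K}f(u)\le 1-f(v)$; hence the corresponding clique constraint on $\delta$ has slack at least $f(v)$ and is not binding.  The delicate case is a clique $K$ of $G-v$ that intersects $N(v)$ but also contains a vertex outside $N[v]$: here the slack $1-\sum_{u\in K}f(u)$ can be smaller, and feasibility must be argued via LP duality (or a Hall-type argument) combining per-vertex and clique slacks.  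This is where the structural force of Case~B matters: it rules out simplicial vertices (because a simplicial $w$ has $N[w]$ a clique, so the clique constraint automatically gives $\sum_{u\in N[w]}f(u)\le 1$) and forces the neighbors of $v$ to have degree close to $d(v)$ (since $\sum_{u\in N[v]}1/(d(u)+\tfrac12)\ge \sum_{u\in N[v]}f(u)>1$ rules out large $d(u)$).  These strong structural constraints should be precisely what makes the LP-duality feasibility argument go through.
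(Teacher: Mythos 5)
Your Case~A is sound and coincides with the paper's first reduction (Lemma~\ref{neighborhood demand bound} is exactly the statement that a minimum counterexample has no Case~A vertex), and your structural observations in Case~B (no simplicial vertices; neighbors of a minimum-degree vertex have degree close to $d(v)$) correspond to Lemmas~\ref{neighborhood demand bound} and~\ref{not many bigger degree neighbors}. But the proposal has a genuine gap: the feasibility of the redistribution LP in Case~B is asserted, not proved, and it is precisely the entire content of the theorem beyond Caro--Wei. Everything is exactly tight on $C_5$ (delete $v$, and the two edge-constraints of the resulting $P_4$ absorb $\delta_{u_1}=\delta_{u_2}=1/5$ with zero slack to spare), so any feasibility argument must be lossless; ``the structural constraints should be precisely what makes the LP-duality argument go through'' is a hope, not a proof. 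Even your per-vertex bound is delicate: the total per-vertex slack $\sum_{u\in N(v)}1/\bigl((d(u)-\tfrac12)(d(u)+\tfrac12)\bigr)$ exceeds $1/(d(v)+\tfrac12)$ only because Case~B forces most neighbors to have degree very close to $\delta$, and verifying this already requires the quantitative form of Lemma~\ref{not many bigger degree neighbors}. The clique constraints are worse: a neighbor $u$ can lie in a clique $K'$ of $G-v$ with $K'\cap N(v)=\{u\}$ and $\sum_{x\in K'}f(x)$ close to $1$ (the only a priori lower bound on the slack $1-\sum_{K'}f$ coming from non-simpliciality is about $\tfrac12/(|K'|+\tfrac12)$, i.e.\ roughly half of $f(v)$), and several neighbors can be blocked simultaneously by such cliques while other cliques meet $N(v)$ in many vertices at once. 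You have not exhibited a dual certificate ruling this out, and nothing in your Case~B hypotheses obviously does.

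For comparison, the paper's proof indicates why purely local information at a single minimum-degree vertex is unlikely to suffice. It shows (Lemma~\ref{min degree form clique}) that the minimum-degree vertices partition into ``base cliques'' $K$ of size at least $(\delta+1)/2$, then deletes $N[v]$ (not just $v$) with a boosted weight function on $G-K$, and crucially \emph{averages} the resulting gain over all $v\in K$ (Lemmas~\ref{single lost color lemma} and~\ref{lost color average}); the contradiction only emerges after further structural lemmas proved via non-induced reductions (identifying two non-adjacent vertices in Lemma~\ref{ell not 0}, adding edges in Lemma~\ref{ell at most D}) pin down the parameters $\ell_K$ and $D_K$ tightly enough for a final optimization. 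The information your LP uses --- slacks of vertices and cliques seen from one vertex $v$ --- does not capture this averaging over the base clique or the constraints on how $U_K$ attaches to $K$. So either the LP is in fact always feasible in Case~B (which would be a genuinely new and shorter proof, but then the duality argument is the whole theorem and must be supplied), or it fails on some near-extremal configuration and the single-vertex reduction has to be replaced by something closer to the paper's base-clique analysis. As it stands, the proposal is an outline of a strategy with its critical step missing.
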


The condition in Theorem~\ref{main thm} that every clique $K\subseteq V(G)$ satisfies $\sum_{v\in K}f(v) \leq 1$ is very natural.  If $G$ is a complete graph, then it is a necessary condition.  The stronger statement of Theorem~\ref{main thm} is also more conducive to our proof by induction, since subgraphs of $G$ also satisfy the hypotheses of the theorem.  In a follow-up paper~\cite{KP19-brooks}, we generalize Theorem~\ref{main thm} to \textit{fractional coloring}.

Now we show how Theorem~\ref{main thm} implies Theorem~\ref{technical independence brooks}.
\begin{proof}[Proof of Theorem~\ref{technical independence brooks} assuming Theorem~\ref{main thm}]
  Let $f : V(G) \rightarrow \mathbb R$ be the function where $f(v) = 1/(d(v) + 1 - \sigma)$ for each $v\in V(G)$.  Since $\sigma \leq 1/2$, by Theorem~\ref{main thm}, it suffices to show that if $K$ is a clique, then $\sum_{v\in K}f(v) \leq 1$.  Each $v\in K$ satisfies $f(v) \leq 1/(|K| - \sigma)$ and moreover $f(v) \leq 1/(|K| + 1 - \sigma)$ if $v$ is not simplicial.  Hence, since $K$ has at most $(1 - \sigma)(|K| - \sigma)$ simplicial vertices,
  \begin{equation*}
    \sum_{v\in K}f(v) \leq \frac{(1 - \sigma)(|K| - \sigma)}{|K| - \sigma} + \frac{|K| - (1 - \sigma)(|K| - \sigma)}{|K| + 1 - \sigma} = 1,
  \end{equation*}
  as required.  Therefore by Theorem~\ref{main thm}, $\alpha(G) \geq \sum_{v\in V(G)}f(v) = \sum_{v\in V(G)}1/(d(v) + 1 - \sigma)$, as desired.
  
  Also note that since $|K|$ is an integer, if $K$ has fewer than $|K|/2$ simplicial vertices, then $K$ has at most $(|K| - 1)/2$ simplicial vertices.  If $\sigma = 1/2$, then $(|K| - 1)/2 = (1 - \sigma)(|K| - \sigma)$, so if each clique $K \subseteq V(G)$ has fewer than $|K| / 2$ simplicial vertices, then $\alpha(G) \geq \sum_{v \in V(G)}1 / (d(v) + 1/2)$, as desired.
\end{proof}

As with Theorem~\ref{strong local turan stability}, it would be interesting to extend Theorems~\ref{technical independence brooks} and~\ref{main thm} for $\sigma \leq 1$ while taking into account additional obstructions.  As Brooks' Theorem suggests, it is necessary to consider odd cycles.  However, blowups of 5-cycles (depicted on the left in Figure~\ref{fig:blowups}) are in some sense ``worse'' obstructions, as follows.  Let $G$ be a graph with $n$ vertices and average degree $d$.  If $G$ is a cycle of length $2k + 1$, then $n/(d + 1 - \sigma) \leq \alpha(G) = k$ only if $\sigma \leq 1 - 1/k = 1 - 2/(n - 1)$, and if $G$ is the graph on the left in Figure~\ref{fig:blowups}, which is obtained by replacing two non-adjacent vertices of $C_5$ with cliques of size $t$, then $n/(d + 1 - \sigma) \leq \alpha(G) = 2$ only if $\sigma \leq 1 - 5/(4t + 6) = 1 - 5/(2n)$.  Moreover, the graphs in Figure~\ref{fig:blowups} are not regular, so if $G$ is one of these graphs, then $\sum_{v\in V(G)}1/(d(v) + 1 - \sigma) < n / (d + 1 - \sigma)$.  If $G$ is the graph on the left, then $\sum_{v\in V(G)}1/(d(v) + 1 - \sigma) \leq 2$ only if $\sigma < 3/4$, for large $t$, and if $G$ is the graph on the right, which is obtained by replacing three pairwise non-adjacent vertices of $C_7$ with cliques of size $t$, then $\sum_{v\in V(G)}1/(d(v) + 1 - \sigma) \leq 3$ only if $\sigma < 1$ for large $t$.  We conjecture that these are essentially the only obstructions, as follows.

\begin{conjecture}
  If $G$ is a graph and $f : V(G) \rightarrow \mathbb R$ such that $f(v) \leq 1/d(v)$ for each $v\in V(G)$, then $\alpha(G) \geq \sum_{v\in V(G)} f(v)$ unless $G$ contains either
  \begin{itemize}
  \item a clique $K \subseteq V(G)$ such that $\sum_{v\in K} f(v) > 1$, or
  \item a subgraph $H$ isomorphic to a blowup of a cycle of length $2k + 1$ such that $\sum_{v\in V(H)}f(v) > k$.
  \end{itemize}
\end{conjecture}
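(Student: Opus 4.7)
The conjecture refines Theorem~\ref{main thm} by loosening the pointwise bound from $1/(d(v) + 1/2)$ to $1/d(v)$ at the cost of also forbidding blow-ups of odd cycles as obstructions, not just oversaturated cliques. I would attempt the proof by strong induction on $|V(G)|$, using the same two-pronged reduction scheme that appears to underlie Theorem~\ref{main thm}: an $N[v]$-deletion step, and a single-vertex-deletion step with compensation along $N(v)$.

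First dispatch the easy reductions. If $f(v) \le 0$ for some $v$, delete $v$ and apply induction. If there is a vertex $v$ with $\sum_{u \in N[v]} f(u) \le 1$, include $v$ in the independent set, pass to $G - N[v]$, and apply induction. The hypotheses are inherited: every clique and every blow-up of an odd cycle appearing in $G - N[v]$ is also such a substructure of $G$, so no new obstruction is introduced, and degrees only decrease, so $f(u) \le 1/d_{G-N[v]}(u)$ continues to hold. In particular, if $v$ is simplicial then $N[v]$ is itself a clique satisfying $\sum f \le 1$, so this reduction fires; thus we may assume $G$ has no simplicial vertex.

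Once every vertex satisfies $f(v) > 0$ and $\sum_{u \in N[v]} f(u) > 1$, fix a vertex $v$, pass to $G - v$, and try to redistribute $f(v)$ onto $N(v)$ as a new weight function $f'(u) = f(u) + \varepsilon_u$ with $\sum_u \varepsilon_u \ge f(v)$. The per-vertex cap is $\varepsilon_u \le 1/(d(u) - 1) - f(u)$, and the residual obstruction constraints take the form $\sum_{u \in K \cap N(v)} \varepsilon_u \le 1 - \sum_{u \in K} f(u)$ for each clique $K$ of $G - v$, together with analogous constraints $\sum_{u \in V(H) \cap N(v)} \varepsilon_u \le k - \sum_{u \in V(H)} f(u)$ for each blow-up $H$ of $C_{2k+1}$ in $G - v$. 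When $K \cup \{v\}$ is itself a clique of $G$, or $V(H) \cup \{v\}$ extends to a blow-up of $C_{2k+1}$ in $G$, the global hypothesis supplies exactly the slack $f(v)$ needed, so such constraints are automatically satisfied. Infeasibility of this LP should therefore force $v$ to sit inside a rigid configuration in $G - v$ that does \emph{not} extend through $v$.

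The main obstacle is the odd-cycle blow-up condition itself. For Theorem~\ref{main thm} only clique constraints are in play, and their dual is a fractional clique cover that is amenable to local analysis; here the blow-up constraints are geometric-topological and do not compose linearly in the natural way. I would try to show that in a minimum counterexample $(G, f)$ every vertex lies in a tight clique or a tight blow-up of an odd cycle saturating its budget, and then stitch these local tight substructures together along an odd closed walk in $G$ to produce a single global blow-up of some $C_{2k+1}$ with $\sum f > k$, in the spirit of Lov\'asz's proof of Brooks' theorem. This closing parity argument---showing that local tightness at every vertex must propagate into a global odd-cycle-blow-up obstruction---is the step I expect to be the hardest, and it is presumably why the statement is left as a conjecture in the paper.
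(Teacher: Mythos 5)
The statement you are attempting to prove is stated as a \emph{conjecture} in the paper; the authors give no proof, and explicitly defer it as an open problem after noting that Theorem~\ref{main thm} (the $\sigma = 1/2$ case) is the furthest they can push.  Your proposal correctly identifies this, and your closing paragraph honestly names the missing step, so there is no dispute about the bottom line: this is a proof \emph{strategy} with a genuine gap, not a proof.  A few remarks on the parts you did develop.

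Your preliminary reductions are sound.  Deleting a vertex $v$ with $f(v) \le 0$ and deleting $N[v]$ when $\sum_{u \in N[v]} f(u) \le 1$ both preserve the hypotheses: deletions only decrease degrees, so the cap $f(u) \le 1/d(u)$ is preserved, and every clique or odd-cycle blowup in the reduced graph is already present in $G$, so no new obstruction appears.  The observation that a simplicial vertex automatically triggers the $N[v]$-deletion is the analogue of Lemma~\ref{no simplicial vertex} and is correct.  (One pedantic caveat: $1/d(v)$ is undefined when $d(v) = 0$; an isolated vertex can be handled separately, but the conjecture as written needs this convention made explicit.)

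The gap is in the redistribution step, and it is wider than you indicate.  You argue that when $K \cup \{v\}$ is a clique of $G$, the constraint $\sum_{u \in K \cap N(v)} \varepsilon_u \le 1 - \sum_{u \in K} f(u)$ is automatically satisfied because the global hypothesis gives $\sum_{K \cup \{v\}} f \le 1$.  That works when $K \subseteq N(v)$.  But a clique $K$ of $G - v$ with $K \cap N(v) \subsetneq K$ is \emph{not} of this form, and the hypothesis applied to $K$ alone gives only $\sum_K f \le 1$, with no extra slack $f(v)$ tied to it; so you have no a priori bound on $\sum_{u \in K \cap N(v)} \varepsilon_u$ relative to the available room $1 - \sum_K f$.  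The same issue arises, more severely, for the odd-cycle-blowup constraints, where the relevant subgraph of $G - v$ need not extend through $v$ to an obstruction in $G$ at all.  So the LP is not obviously feasible even before you reach the parity-stitching step; your duality/tight-structure analysis would have to account for these ``crossing'' cliques and blowups as well.  Beyond that, the proposed final step---gluing local tight configurations along an odd closed walk into a single tight blowup of $C_{2k+1}$---has no analogue in the paper's proof of Theorem~\ref{main thm} (which stops short of odd-cycle obstructions precisely because $\sigma \le 1/2$ suffices), and there is no evident reason such a gluing preserves the additive bound $\sum_{V(H)} f > k$.  Your instinct that this is where the difficulty lies is correct; it is the reason the statement remains a conjecture.
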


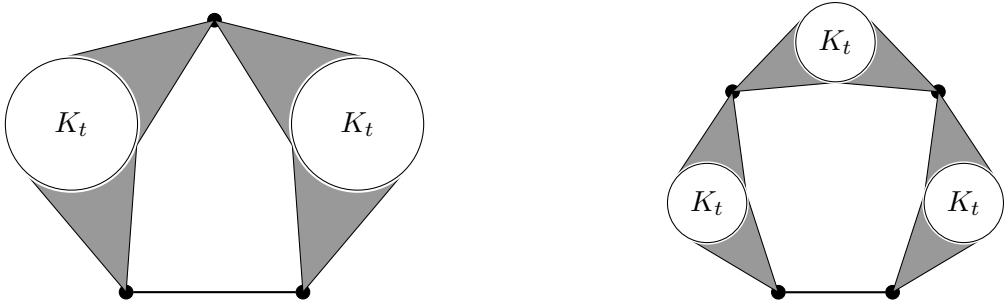
\begin{figure}
  \centering
  \begin{minipage}[t]{.5\textwidth}
    \centering
    \begin{tikzpicture}[scale = 2, remember picture]
      
      \tikzstyle{every node}=[draw, circle];

      \node[scale = .5, fill] (a) at (90:1) {};
      \node[minimum size = 50pt] (b) at (90 + 72:1) {$K_t$};
      \node[scale = .5, fill] (c) at (90 + 72*2:1) {};
      \node[scale = .5, fill] (d) at (90 + 72*3:1) {};
      \node[minimum size = 50pt] (e) at (90 + 72*4:1) {$K_t$};
      \draw[thick] (c) -- (d);

      \tikzstyle{reverseclip}=[insert path={(current page.north east) --
        (current page.south east) --
        (current page.south west) --
        (current page.north west) --
        (current page.north east)}
      ]

      \begin{pgfinterruptboundingbox} 
        \path [clip] (b.center) circle (13pt) [reverseclip];
        \path [clip] (e.center) circle (13pt) [reverseclip];

      \end{pgfinterruptboundingbox}

      \filldraw[fill=white!60!black] (a.center)  -- (tangent cs:node=b,point={(a)},solution=1) -- (tangent cs:node=b,point={(a)},solution=2) -- cycle;
      
      \filldraw[fill=white!60!black] (a.center)  -- (tangent cs:node=e,point={(a)},solution=1) -- (tangent cs:node=e,point={(a)},solution=2) -- cycle;
      \filldraw[fill=white!60!black] (c.center)  -- (tangent cs:node=b,point={(c)},solution=1) -- (tangent cs:node=b,point={(c)},solution=2) -- cycle;
      \filldraw[fill=white!60!black] (d.center)  -- (tangent cs:node=e,point={(d)},solution=1) -- (tangent cs:node=e,point={(d)},solution=2) -- cycle;

    \end{tikzpicture}
  \end{minipage}%
  \begin{minipage}[t]{.5\textwidth}
    \centering
        \begin{tikzpicture}[scale = 1.75, remember picture]
      
          \tikzstyle{every node}=[draw, circle];
          \tikzstyle{vtx} = [scale=.5, fill];
          \tikzstyle{blowup} = [minimum size = 30pt, label=center:$K_t$];
          
          \node[blowup] (a) at (90:1) {};
          \node[vtx] (b) at (90 + 360/7:1) {};
          \node[blowup] (c) at (90 + 2*360/7:1) {};
          \node[vtx] (d) at (90 +3*360/7:1) {};
          \node[vtx] (e) at (90 + 4*360/7:1) {};
          \node[blowup] (f) at (90 + 5*360/7:1) {};
          \node[vtx] (g) at (90 + 6*360/7:1) {};

          \draw[thick] (d) -- (e);

          \tikzstyle{reverseclip}=[insert path={(current page.north east) --
            (current page.south east) --
            (current page.south west) --
            (current page.north west) --
            (current page.north east)}
          ]

          \begin{pgfinterruptboundingbox} 
            \path [clip] (a.center) circle (9pt) [reverseclip];
            \path [clip] (c.center) circle (9pt) [reverseclip];
            \path [clip] (f.center) circle (9pt) [reverseclip];
          \end{pgfinterruptboundingbox}

          \filldraw[fill=white!60!black] (b.center)  -- (tangent cs:node=a,point={(b)},solution=1) -- (tangent cs:node=a,point={(b)},solution=2) -- cycle;
          \filldraw[fill=white!60!black] (g.center)  -- (tangent cs:node=a,point={(g)},solution=1) -- (tangent cs:node=a,point={(g)},solution=2) -- cycle;

          \filldraw[fill=white!60!black] (b.center)  -- (tangent cs:node=c,point={(b)},solution=1) -- (tangent cs:node=c,point={(b)},solution=2) -- cycle;
          \filldraw[fill=white!60!black] (d.center)  -- (tangent cs:node=c,point={(d)},solution=1) -- (tangent cs:node=c,point={(d)},solution=2) -- cycle;

          \filldraw[fill=white!60!black] (e.center)  -- (tangent cs:node=f,point={(e)},solution=1) -- (tangent cs:node=f,point={(e)},solution=2) -- cycle;
          \filldraw[fill=white!60!black] (g.center)  -- (tangent cs:node=f,point={(g)},solution=1) -- (tangent cs:node=f,point={(g)},solution=2) -- cycle;
    \end{tikzpicture}

  \end{minipage}
  
  \caption{Blowups of odd cycles}
  \label{fig:blowups}
\end{figure}

\subsection{Outline of the paper}

Besides the proof of Corollary~\ref{strong turan stability} in Appendix~\ref{strong turan section}, the remainder of the paper is devoted to the proof of Theorem~\ref{main thm}.  In Sections~\ref{prelim-section}-\ref{base clique structure} we prove various properties of a hypothetical minimum counterexample to Theorem~\ref{main thm}.  In Section~\ref{main proof section} we show that a graph cannot simultaneously have these properties, thus demonstrating that a hypothetical counterexample to Theorem~\ref{main thm} does not exist, completing the proof.  A few of the lemmas in Sections~\ref{prelim-section}-\ref{main proof section}, as in the proof of Corollary~\ref{strong turan stability}, utilize standard ``calculus'' arguments to determine a bound on the value of a differentiable multivariate function over a certain region.  In order to not disrupt the flow of the more important ``combinatorial'' arguments in the proof, we collect these ``calculus'' arguments in Appendix~\ref{claim proof section}.

\section{Basic properties of a minimum counterexample}\label{prelim-section}

For the next three sections (that is, Sections~\ref{prelim-section},~\ref{overcoloring section}, and~\ref{base clique structure}), we suppose $G$ is a hypothetical minimum counterexample to Theorem~\ref{main thm} with respect to a function $f$, that is a graph with $\alpha(G) < \sum_{v\in V(G)} f(v)$ where $f : V(G)\rightarrow \mathbb R$ satisfies $f(v) \leq 1/(d(v) + 1/2)$ for each $v\in V(G)$ and $\sum_{v\in K}f(v) \leq 1$ for each clique $K$ in $G$, and subject to these requirements, we assume $G$ is chosen with the minimum number of vertices.
In this section, we prove some basic properties of the minimum counterexample $G$ that will be used frequently in Sections~\ref{overcoloring section} and~\ref{base clique structure}.

For convenience, we let $\delta$ denote the minimum degree of $G$.  We also frequently use the following standard graph theory notation: for a graph $H$ and a vertex $v\in V(H)$, we denote the \textit{open neighborhood} of $v$ in $H$ by $N_H(v)$ and the \textit{closed neighborhood} of $v$ in $H$ by $N_H[v]$.  That is, $N_H(v)$ is the set of vertices in $H$ adjacent to $v$, and $N_H[v] = N_H(v)\cup\{v\}$.  We often omit the subscript $H$ when it is clear from the context.

\begin{lemma}\label{neighborhood demand bound}\label{no simplicial vertex}
  Every vertex $v\in V(G)$ satisfies $f(v) + \sum_{u\in N(v)}f(u) > 1$.  In particular, there are no simplicial vertices in $G$ and $\delta \geq 2$.
\end{lemma}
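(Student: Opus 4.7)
The plan is the standard ``remove the closed neighborhood and invoke minimality'' argument. Suppose for contradiction that some vertex $v\in V(G)$ satisfies $f(v) + \sum_{u\in N(v)} f(u) \leq 1$, and let $G' := G - N[v]$ equipped with the restriction $f|_{V(G')}$. The overall goal is to build an independent set in $G$ containing $v$ that is large enough to contradict $\alpha(G) < \sum_{u\in V(G)}f(u)$.

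First I would check that the pair $(G', f|_{V(G')})$ still satisfies the hypotheses of Theorem~\ref{main thm}. Degrees only go down in induced subgraphs, so $f(u) \leq 1/(d_G(u) + 1/2) \leq 1/(d_{G'}(u) + 1/2)$ for every $u \in V(G')$, and every clique of $G'$ is also a clique of $G$, so the clique inequalities transfer. Since $v \in N[v]$, we have $|V(G')| < |V(G)|$, so by the minimality of $G$ the graph $G'$ is not itself a counterexample: $\alpha(G') \geq \sum_{u\in V(G')} f(u)$.

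Next I would combine the pieces. Taking a maximum independent set $I$ of $G'$ and adjoining $v$ yields an independent set of $G$ (since $I \cap N[v] = \emptyset$ rules out $v$ itself and all its neighbors), of size
\[
|I| + 1 \;\geq\; \sum_{u\in V(G')} f(u) + 1 \;=\; \sum_{u\in V(G)} f(u) - \sum_{u\in N[v]} f(u) + 1 \;\geq\; \sum_{u\in V(G)} f(u),
\]
using the hypothesized upper bound $\sum_{u\in N[v]} f(u) \leq 1$ in the last step. This contradicts the assumption that $G$ is a counterexample and establishes the main inequality.

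The ``in particular'' statements then follow almost immediately. If $v$ were simplicial, then $N[v]$ would be a clique and the clique hypothesis would force $\sum_{u\in N[v]} f(u) \leq 1$, directly contradicting the inequality just proved. In the same spirit, vertices of degree $0$ or $1$ are simplicial (alternatively, the cliques $\{v\}$ and $\{v,u\}$ immediately give $f(v) \leq 1$ and $f(v)+f(u)\leq 1$), so $\delta \geq 2$. The argument is almost entirely routine; the only ``obstacle'' is the bookkeeping in the first step, as the rest is a one-line application of minimality combined with the standard trick of restoring a deleted vertex to an independent set.
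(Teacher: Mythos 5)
Your proof is correct and takes essentially the same route as the paper: both apply minimality to $G - N[v]$ and then extend a maximum independent set of $G - N[v]$ by $v$, with the paper deriving the inequality directly rather than by contradiction. The only superficial difference is that you explicitly verify the hypotheses descend to the induced subgraph, which the paper leaves implicit.
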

\begin{proof}
  By the minimality of $G$, we have $\alpha(G - N[v]) \geq \sum_{u\in V(G-N[v])}f(u)$, and since $\alpha(G) \geq \alpha(G - N[v]) + 1$, we have $\alpha(G) \geq 1 + \sum_{u\in V(G-N[v])}f(u)$.  Since $G$ is a counterexample, $\alpha(G) < \sum_{u\in V(G)} f(u)$, so $\sum_{u\in V(G)}f(u) > 1 + \sum_{u\in V(G - N[v])}f(u)$.  By cancelling the terms of the sum $\sum_{u\in V(G - N[v])}f(u)$ in this inequality, we obtain the first inequality.

  Now if $v$ is a simplicial vertex in $G$, then $G[N[v]]$ is a clique, so $f(v) + \sum_{u\in N(v)}f(u) \leq 1$, a contradiction.  Since a vertex of degree one is simplicial, $\delta \geq 2$.
\end{proof}

The argument in Lemma~\ref{neighborhood demand bound} that $f(v) + \sum_{u\in N(v)} f(u) > 1$ actually holds for a minimum counterexample in a more general context, regardless of any stipulations on the function $f$.  Thus, this simple lemma can be used to prove the Caro-Wei Theorem by letting $f(v) = 1/(d(v) + 1)$ for each vertex $v$, as $f(v) + \sum_{u\in N(v)}f(u) \leq 1$ when $v$ has minimum degree.  Indeed, this is essentially Wei's~\cite{W81} original proof (see~\cite[p.~23]{G83}).  Our strategy also focusses on the minimum degree vertices.  The next lemma implies that at least roughly half of the neighbors of a minimum degree vertex also have minimum degree, but we need the following more general form.

\begin{lemma}\label{demand at least one many min degree lemma}\label{not many bigger degree neighbors}
  Suppose $X\subseteq V(G)$ such that $\sum_{v\in X}f(v) > 1$.  If $d(v) \geq |X| - 1$ for every $v\in X$, then fewer than $(|X| + 1/2)/2$ vertices in $X$ have degree at least $|X|$.  In particular, if $d(v) \leq d(u)$ for all $u\in N(v)$, then fewer than $(d(v) + 3/2)/2$ neighbors of $v$ have degree greater than $d(v)$.
\end{lemma}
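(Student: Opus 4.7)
The plan is to argue directly from the degree-based upper bound on $f$ provided by the hypothesis of Theorem~\ref{main thm}, partitioning $X$ according to whether a vertex has degree exactly $|X| - 1$ or degree at least $|X|$, and then solving the resulting linear inequality.

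Set $k = |X|$ and let $m$ be the number of vertices in $X$ of degree at least $k$. Since every vertex $v \in X$ has $d(v) \geq k - 1$, each such vertex satisfies $f(v) \leq 1/(k - 1/2)$, while the $m$ vertices of degree at least $k$ satisfy the sharper bound $f(v) \leq 1/(k + 1/2)$. Splitting the sum accordingly gives
\begin{equation*}
  1 < \sum_{v \in X} f(v) \leq \frac{m}{k + 1/2} + \frac{k - m}{k - 1/2}.
\end{equation*}
Multiplying through by $(k + 1/2)(k - 1/2) = k^2 - 1/4$ and simplifying, the terms involving $mk$ cancel and the inequality reduces to $k/2 - m > -1/4$, i.e., $m < (k + 1/2)/2$, which is exactly the claimed bound.

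For the ``in particular'' statement, take $X = N[v]$, so $|X| = d(v) + 1$. By Lemma~\ref{neighborhood demand bound}, $\sum_{u \in X} f(u) = f(v) + \sum_{u \in N(v)} f(u) > 1$, and since $d(v) \leq d(u)$ for every $u \in N(v)$ (and $d(v) = |X| - 1$ trivially for $v$ itself), every vertex of $X$ has degree at least $|X| - 1$. The first part then yields fewer than $(|X| + 1/2)/2 = (d(v) + 3/2)/2$ vertices in $X$ of degree at least $|X| = d(v) + 1$. Since $v$ itself has degree $d(v) < d(v) + 1$, it does not contribute, so fewer than $(d(v) + 3/2)/2$ neighbors of $v$ have degree greater than $d(v)$, as required.

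There is no real obstacle here; the only care needed is in bookkeeping the bound $f(v) \leq 1/(d(v) + 1/2)$ at the two relevant degree thresholds and in noting that $v$ itself lies in $X = N[v]$ but is automatically not among the ``high-degree'' vertices counted.
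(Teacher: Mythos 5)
Your proof is correct and follows essentially the same route as the paper: you bound $f$ above on the two degree classes ($d = |X| - 1$ and $d \geq |X|$), sum, and solve the resulting linear inequality, which is exactly the paper's computation with the complementary count (number of high-degree rather than low-degree vertices). The deduction of the ``in particular'' statement from the first part, taking $X = N[v]$ and invoking Lemma~\ref{neighborhood demand bound}, also matches the paper.
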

\begin{proof}
  Let $X'\subseteq X$ be the vertices in $X$ of degree $|X| - 1$ in $G$.  Now
  \begin{equation*}
    1 < \sum_{v\in X} f(v) \leq \frac{|X'|}{|X| - 1/2} + \frac{|X| - |X'|}{|X| + 1/2} = \frac{|X|(|X| - 1/2) + |X'|}{(|X| - 1/2)(|X| + 1/2)}.
  \end{equation*}
  Therefore
  \begin{equation*}
    |X'| > (|X| - 1/2)/2,
  \end{equation*}
  so fewer than
  \begin{equation*}
    |X| - (|X| - 1/2)/2 = (|X| + 1/2)/2
  \end{equation*}
  vertices have degree at least $|X|$, as desired.

  Moreover, if $d(v) \leq d(u)$ for all $u\in N(v)$ and $X = N[v]$, then by assumption, $d(w) \geq |X| - 1$ for every $w\in X$, and by Lemma~\ref{neighborhood demand bound}, $\sum_{v\in X}f(v) > 1$. Thus, fewer than $(d(v) + 3/2)/2$ neighbors of $v$ have degree greater than $d(v)$, as desired.
\end{proof}

\begin{lemma}\label{min degree form clique}
  If $u$ and $w$ are non-adjacent vertices, then $f(u) + f(w) + \sum_{v\in N(u)\cup N(w)}f(v) > 2$.  In particular, if $u$ and $w$ further satisfy $d(u) \leq d(v)$ for all $v\in N(u)$ and $d(w) \leq d(v)$ for all $v\in N(w)$, then $N(u)$ and $N(w)$ are disjoint.
\end{lemma}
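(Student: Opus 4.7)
The plan is to prove the first inequality by the same ``deletion and minimality'' argument as in Lemma~\ref{neighborhood demand bound}, but applied to two vertices at once, and then to derive the ``In particular'' statement as a direct consequence using the degree hypothesis.

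For the first inequality, let $G' := G - (N[u] \cup N[w])$. Since $G'$ is strictly smaller than $G$ and inherits the hypotheses of Theorem~\ref{main thm}, by the minimality of $G$ there is an independent set $I \subseteq V(G')$ with $|I| \geq \sum_{v \in V(G')} f(v)$. Because $u$ and $w$ are non-adjacent and neither has a neighbor in $V(G')$, the set $I \cup \{u, w\}$ is independent in $G$, so $\alpha(G) \geq 2 + \sum_{v \in V(G')} f(v)$. Combining this with $\alpha(G) < \sum_{v\in V(G)} f(v)$ and cancelling the common sum gives the first inequality; here I use that $V(G) \setminus V(G') = \{u, w\} \cup N(u) \cup N(w)$ with $\{u, w\}$ disjoint from $N(u) \cup N(w)$ precisely because $u$ and $w$ are non-adjacent.

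For the second statement, suppose for contradiction that $k := |N(u) \cap N(w)| \geq 1$, and (by symmetry) assume $d(u) \leq d(w)$; write $d := d(u)$. The hypothesis $d(v) \geq d(u)$ for $v \in N(u)$ gives $f(v) \leq 1/(d + 1/2)$ on $N(u) \setminus N(w)$, while $d(v) \geq d(w)$ for $v \in N(w)$ gives $f(v) \leq 1/(d(w) + 1/2)$ on all of $N(w)$ (in particular on the $k$ common neighbors). Together with $f(u) \leq 1/(d + 1/2)$ and $f(w) \leq 1/(d(w) + 1/2)$, summation yields
$$f(u) + f(w) + \sum_{v \in N(u)\cup N(w)} f(v) \leq \frac{d + 1 - k}{d + 1/2} + \frac{d(w) + 1}{d(w) + 1/2} = 2 + \frac{1/2 - k}{d + 1/2} + \frac{1/2}{d(w) + 1/2}.$$
Since $k \geq 1$ and $d(w) \geq d$, the two fractional corrections cancel non-positively, giving a total of at most $2$ and contradicting the first part. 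Hence $N(u) \cap N(w) = \emptyset$.

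The only subtlety, and the one point worth being careful about, is the bookkeeping for the common neighbors: each $v \in N(u) \cap N(w)$ must be counted exactly once in $N(u) \cup N(w)$, and since such a $v$ lies in $N(w)$ it inherits the stronger bound $f(v) \leq 1/(d(w) + 1/2)$. Everything else is a straightforward two-vertex analogue of the reasoning in Lemmas~\ref{neighborhood demand bound} and~\ref{not many bigger degree neighbors}, so I do not expect any further obstacle.
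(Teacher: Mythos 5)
Your proposal is correct and follows essentially the same route as the paper: the first inequality is proved by exactly the same deletion-and-minimality argument on $G - (N[u]\cup N[w])$, and the disjointness claim by the same degree-based bounding of $f$ over $N[u]\cup N[w]$. The only cosmetic difference is that the paper isolates a single common neighbor $x$ and derives the contradiction $f(x) > 1/(d(x)+1/2)$, whereas you account for all $k$ common neighbors at once and contradict the first inequality directly; both computations are valid and equivalent.
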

\begin{proof}
  By the minimality of $G$, we have $\alpha(G - (N[u]\cup N[w])) \geq \sum_{v\in V(G - (N[u]\cup N[w]))}f(v)$, and since $\alpha(G) \geq \alpha(G - (N[u]\cup N[w])) + 2$, we have $\alpha(G) \geq 2 + \sum_{v\in V(G - (N[u]\cup N[w]))}f(v)$.  Since $G$ is a counterexample, $\alpha(G) < \sum_{v\in V(G)}f(v)$, so $\sum_{v\in V(G)} f(v) > 2 + \sum_{v\in V(G - (N[u]\cup N[w]))}f(v)$.  By cancelling the terms of $\sum_{v\in V(G - (N[u]\cup N[w]))}f(v)$, we obtain $f(u) + f(w) + \sum_{v\in N(u)\cup N(w)}f(v) > 2$, as desired.
    
  Now suppose to the contrary that $d(u) \leq d(v)$ for all $v\in N(u)$, that $d(w) \leq d(v)$ for all $w\in N(w)$, and moreover that $u$ and $w$ share a common neighbor, $x$.  Now we have
  \begin{equation}\label{min degree common nbr equation}
    2 < f(u) + f(w) + \sum_{v\in N(u)\cup N(w)}f(v) \leq f(x) + \sum_{v\in N[u]\setminus\{x\}}f(v) + \sum_{v\in N[w]\setminus\{x\}}f(v) .
  \end{equation}
  Since $d(u) \leq d(v)$ for all $v\in N(u)$, we have $\sum_{v\in N[u]\setminus\{x\}}f(v) \leq d(u)/(d(u) + 1/2)$, and similarly, $\sum_{v\in N[w]\setminus\{x\}}f(v) \leq d(w)/(d(w) + 1/2)$.  We assume without loss of generality that $d(u) \leq d(w)$, in which case $d(u)/ (d(u) + 1/2) \leq d(w)/ (d(w) + 1/2)$.  Now by~\eqref{min degree common nbr equation}, we have $2 < 2d(w) / (d(w) + 1/2) + f(x)$, so $f(x) > 1/(d(w) + 1/2) \geq 1/(d(x) + 1/2)$, a contradiction.
\end{proof}

The proofs of Lemmas~\ref{neighborhood demand bound} and~\ref{min degree form clique} may feel similar.  Indeed, there is a common generalization: if $I$ is an independent set, then $\sum_{v\in I}f(v) + \sum_{u \in N(I)}f(u) > |I|$, where $N(I) = \cup_{v\in I}N(v)$.  However, we only need this result when $|I| \in \{1,2\}$.

Importantly, Lemma~\ref{min degree form clique} implies that the minimum degree vertices can be partitioned into cliques with no edges between them, as follows.  We say a \textit{base clique} of $G$ is a maximum cardinality set of vertices of minimum degree that forms a clique in $G$.  Lemma~\ref{min degree form clique} implies that every vertex of minimum degree is contained in a unique base clique.  Moreover, vertices in different base cliques are not adjacent, and Lemma~\ref{demand at least one many min degree lemma} implies that if $K$ is a base clique, then $|K| > \delta + 1 - (\delta + 3/2)/2 = \delta/2 + 1/4$.  Since $|K|$ is an integer, we thus have the following.
\begin{equation}
  \label{eq:size-of-K}
  \text{If $K$ is a base clique, then $|K| \geq (\delta + 1)/2$.}
\end{equation}

The next two sections focus on these base cliques.  The following definitions are crucial to the proof.

\begin{definition}
  Let $K$ be a base clique of $G$.  
  Now, 
  \begin{itemize}
  \item let $A_K$ be the set of vertices not in $K$ that are complete to $K$,
  \item let $U_K$ be the subset of vertices in $V(G)\setminus(A_K\cup K)$ with a neighbor in $K$,
  \item let $\ell_K = \delta + 1 - |K\cup A_K|$ denote the number of neighbors each vertex in $K$ has in $U_K$, and
  \item let $D_K = \max\{|K\cap N(u)| : u\in U_K\}$ if $U_K \neq \varnothing$ and $0$ otherwise.
  \end{itemize}
\end{definition}

Note that if $K$ is a base clique, then by definition, each vertex in $A_K$ has degree at least $\delta + 1$, and by Lemma~\ref{min degree form clique}, each vertex in $U_K$ has degree at least $\delta + 1$ as well.

\section{Averaging over base cliques}\label{overcoloring section}

Note that there is some ``slack'' in the proof of Lemma~\ref{neighborhood demand bound}.  In particular, if a vertex $w\notin N[v]$ has a neighbor in $N(v)$, then $f(w) < 1/(d_{G-N[v]}(w) + 1/2)$, and so it may be possible to find a larger independent set in $G - N[v]$ by increasing the value of $f(w)$.  We need to be careful, though, not to increase $f(w)$ too much (say to $1/(d_{G - N[v]}(w) + 1/2)$), because $w$ may be simplicial and we need each clique $K$ to satisfy $\sum_{u\in K} f(u) \leq 1$.  It is no coincidence that in the 5-cycle, the extremal example for our theorem, the graph obtained from deleting the closed neighborhood of a minimum degree vertex is complete.

In this section, we exploit this ``slack'' in Lemma~\ref{single lost color lemma}.  One of the main novelties of our proof, with Lemma~\ref{min degree form clique} in hand, is the idea to ``average'' this gain from the slack over a base clique, which we utilize in Lemma~\ref{lost color average}.  This motivates the following definition.

\begin{definition}
  Let $K$ be a base clique of $G$, and let $f_K : V(G - K) \rightarrow\mathbb R$ defined by
  \begin{equation*}
    f_K(v) = \left\{
      \begin{array}{l l}
        \frac{1}{d_{G - K}(v) + 1} & \text{if } v \text{ is simplicial in } G - K,\\
        \frac{1}{d_{G - K}(v) + 1/2} & \text{otherwise}.
      \end{array}\right.
  \end{equation*}
\end{definition}

If $K$ is a base clique of $G$, then since every vertex $v \in V(G - K)$ satisfies $f_K(v) \leq 1/(d_{G - K}(v) + 1/2)$ and every clique $K'$ satisfies $\sum_{v\in K'}f_K(v) \leq 1$, by the minimality of $G$, we have $\alpha(G - K) \geq \sum_{v\in V(G - K)}f_K(v)$.  We use this fact in the next lemma.

\begin{lemma}\label{single lost color lemma}
  If $K$ is a base clique and $v\in K$, then
  \begin{equation}\label{single lost color equation}
  \frac{\ell_K + 1/2}{\delta + 3/2 } - \frac{|K|}{(\delta + 3/2)(\delta + 1/2)} < \sum_{u\in U_K}f(u) - \sum_{u\in U_K\setminus N(v)}f_K(u).
\end{equation}
\end{lemma}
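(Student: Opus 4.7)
The plan is to exploit the minimality of $G$ applied to the smaller graph $G - N[v]$, harnessing the slack that $f_K$ affords on vertices of $U_K$ whose degrees drop when $K$ is deleted. Write $W_v = N(v) \cap U_K$, so $|W_v| = \ell_K$ because the $\delta$ neighbors of $v$ partition as $|K| - 1$ in $K$, $|A_K|$ in $A_K$, and $\ell_K$ in $U_K$. Let $R = V(G) \setminus (K \cup A_K \cup U_K)$ be the set of vertices with no neighbor in $K$. Then $V(G - N[v]) = (U_K \setminus W_v) \sqcup R$.

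The crux is a hybrid function $g : V(G - N[v]) \to \mathbb{R}$ defined by $g(u) = f(u)$ for $u \in R$ and $g(u) = f_K(u)$ for $u \in U_K \setminus W_v$. I will verify that $(G - N[v], g)$ satisfies the hypotheses of Theorem~\ref{main thm}. The degree condition is immediate from $d_{G - N[v]}(u) \leq d_G(u)$ and $d_{G - N[v]}(u) \leq d_{G - K}(u)$. For the clique condition, any clique $K' \subseteq V(G - N[v])$ is also a clique in $G - K$; and each $u \in R$ has the same neighborhood in $G$ as in $G - K$, so Lemma~\ref{no simplicial vertex} ensures $u$ is non-simplicial in $G - K$, giving $f(u) \leq 1/(d_G(u) + 1/2) = f_K(u)$. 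Consequently $\sum_{u \in K'} g(u) \leq \sum_{u \in K'} f_K(u) \leq 1$.

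By the minimality of $G$, we then have $\alpha(G - N[v]) \geq \sum_{u \in R} f(u) + \sum_{u \in U_K \setminus W_v} f_K(u)$. Since $\alpha(G) \geq \alpha(G - N[v]) + 1$ and $\sum_{u \in V(G)} f(u) > \alpha(G)$, cancelling the common sum over $R$ and rearranging yields
\begin{equation*}
\sum_{u \in U_K} f(u) - \sum_{u \in U_K \setminus N(v)} f_K(u) > 1 - \sum_{u \in K} f(u) - \sum_{u \in A_K} f(u).
\end{equation*}

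To finish, I will use $d(u) = \delta$ on $K$ (so $\sum_{u \in K} f(u) \leq |K|/(\delta + 1/2)$) together with $d(u) \geq \delta + 1$ on $A_K$ and $|A_K| = \delta + 1 - |K| - \ell_K$ (so $\sum_{u \in A_K} f(u) \leq (\delta + 1 - |K| - \ell_K)/(\delta + 3/2)$). A short algebraic identity reduces $1 - |K|/(\delta + 1/2) - (\delta + 1 - |K| - \ell_K)/(\delta + 3/2)$ to $(\ell_K + 1/2)/(\delta + 3/2) - |K|/((\delta + 3/2)(\delta + 1/2))$, completing the proof. The main subtlety is choosing $g$ correctly: replacing $f$ with $f_K$ on $R$ as well would invalidate the clique condition on cliques that mix $R$ and $U_K \setminus W_v$, while using $f$ everywhere would forfeit the slack we need to extract.
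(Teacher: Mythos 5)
Your proof is correct and follows essentially the same route as the paper's: apply the minimality of $G$ to $G - N[v]$ with a function that takes advantage of the degree drop on $U_K$, then rearrange and bound the contributions of $K$ and $A_K$. The only cosmetic difference is that the paper applies minimality directly with $f_K$ restricted to $V(G - N[v])$ and uses $f(u) \leq f_K(u)$ for $u \in R$ implicitly when cancelling the $R$-terms during the rearrangement, whereas you package that same inequality into the hybrid function $g$ (where it instead justifies the clique condition); the resulting inequality and concluding algebra are identical.
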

\begin{proof}
  For convenience, let $\ell = \ell_K$, $U = U_K$, and $A = A_K$.
  Since $\alpha(G - N[v]) + 1 \leq \alpha(G) < \sum_{u\in V(G)}f(u)$ and $\alpha(G - N[v]) \geq \sum_{u\in V(G - N[v])}f_K(u)$, we have $\sum_{u\in V(G - N[v])}f_K(u) \leq \sum_{u\in V(G)}f(u) - 1$.  Rearranging terms, we have
  \begin{equation*}
    1 - \sum_{u\in K}f(u) - \sum_{u\in N(v)\cap A}f(u) < \sum_{u\in U}f(u) - \sum_{u\in U\setminus N(v)}f_K(u).
  \end{equation*}
  Since $f(u) \leq 1/(\delta + 3/2)$ for each $u\in A$ and $f(u) \leq 1/(\delta + 1/2)$ for each $u\in K$, the left side of the previous inequality is at least $\frac{\delta + 3/2 - |A|}{\delta + 3/2} - \frac{|K|}{\delta + 1/2}$.  Since $\delta + 1 - |A| = |K| + \ell$, we have for each $v\in K$,
  \begin{equation*}
    \frac{\ell + 1/2}{\delta + 3/2} - \frac{|K|}{(\delta + 3/2)(\delta + 1/2)} < \sum_{u\in U}f(u) - \sum_{u\in U\setminus N(v)}f_K(u),
  \end{equation*}
  as desired.
\end{proof}

Now we show how to ``average'' the gain in Lemma~\ref{single lost color lemma} over the entire base clique.

\begin{lemma}\label{lost color average}
  If $K$ is a base clique of $G$, then
  \begin{equation}\label{lost color average first inequality}
    \ell_K + 1/2 - \frac{|K|}{\delta + 1/2} < \sum_{u\in U_K}\frac{|K\cap N(u)|(\delta + 3/2 -|K|) + |K|/2}{|K|(\delta + 2 - |K\cap N(u)|)}.
  \end{equation}
\end{lemma}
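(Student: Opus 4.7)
The plan is to sum Lemma~\ref{single lost color lemma} over all $v \in K$, swap the order of summation in the resulting double sum, and then rescale by a factor of $(\delta + 3/2)/|K|$. Applying Lemma~\ref{single lost color lemma} to each $v \in K$ and adding the $|K|$ resulting inequalities, the left side is $\frac{|K|}{\delta + 3/2}\bigl(\ell_K + 1/2 - \frac{|K|}{\delta + 1/2}\bigr)$, since the left side of Lemma~\ref{single lost color lemma} does not depend on $v$. The right side is $|K|\sum_{u \in U_K} f(u) - \sum_{v \in K}\sum_{u \in U_K \setminus N(v)} f_K(u)$, and interchanging the order in the double sum collapses it to $\sum_{u \in U_K}(|K| - k_u)f_K(u)$, where I write $k_u := |K \cap N(u)|$ and use that each $u \in U_K$ is missed by exactly $|K| - k_u$ vertices of $K$. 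Multiplying through by $(\delta + 3/2)/|K|$ puts the summed inequality in the form
\[
\ell_K + \frac{1}{2} - \frac{|K|}{\delta + 1/2} < \sum_{u \in U_K} \frac{(\delta + 3/2)\bigl[|K|f(u) - (|K| - k_u) f_K(u)\bigr]}{|K|}.
\]

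It then suffices to show the summand on the right is bounded above by $\frac{k_u(\delta + 3/2 - |K|) + |K|/2}{|K|(\delta + 2 - k_u)}$ for each $u \in U_K$. Writing $d := d(u)$, which satisfies $d \geq \delta + 1$ (observed after the definition of $D_K$), the natural approach is to plug in the hypothesis $f(u) \leq 1/(d + 1/2)$ together with the straightforward lower bound $f_K(u) \geq 1/(d - k_u + 1)$ arising from the simplicial-in-$G - K$ case of the definition of $f_K$. A direct algebraic substitution shows that at $d = \delta + 1$ the resulting expression equals the target \emph{exactly}, so the entire strategy rests on arguing that the per-vertex expression is maximized at $d = \delta + 1$.

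The main obstacle is this maximization step: a naive monotonicity check in $d$ reduces to the quadratic inequality $|K|(d - k_u + 1)^2 \geq (|K| - k_u)(d + 1/2)^2$ on $d \geq \delta + 1$, and the relevant parameter range is $(\delta+1)/2 \leq |K| \leq \delta$ (the upper bound forced by $U_K \neq \varnothing$, the lower bound supplied by~\eqref{eq:size-of-K}) and $1 \leq k_u \leq |K| - 1$. When $k_u$ is close to $|K|$ and $u$ is simplicial in $G - K$, the crude $f_K$-bound alone can fail to give monotonicity, and extra leverage must be extracted from the clique constraint $\sum_{w \in \{u\} \cup N_{G-K}(u)} f(w) \leq 1$ (a clique of size $d - k_u + 1$ in $G$), which sharpens the bound on $f(u)$ precisely where the degree constraint is loose. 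Combining these bounds, the verification becomes a routine multivariate optimization in $d$, $k_u$, and $|K|$ over the region above; this is exactly the sort of ``calculus'' step the authors collect in Appendix~\ref{claim proof section}, so I would relegate the detailed check there and conclude by summing the per-vertex bound over $u \in U_K$.
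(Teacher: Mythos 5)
Your plan reproduces the paper's approach: average Lemma~\ref{single lost color lemma} over all $v\in K$, collapse the double sum to $\sum_{u\in U_K}\bigl(f(u) - (|K|-k_u)f_K(u)/|K|\bigr)$, bound each term using $f(u)\leq 1/(d(u)+1/2)$ and $f_K(u)\geq 1/(d(u)-k_u+1)$, and argue the resulting expression attains its maximum at $d(u) = \delta + 1$. That is exactly what the paper does. Where you differ is that the paper dispatches the final step in one sentence, asserting that the expression
\[
h(d) \;=\; \frac{1}{d + 1/2} - \frac{|K| - k_u}{|K|\,(d - k_u + 1)}
\]
is decreasing in $d$ for $d \geq \delta$, whereas you correctly recognize that this monotonicity is genuinely delicate and can fail when $k_u$ is large relative to $|K|$. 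Your skepticism is warranted: the asserted monotonicity is in fact false in part of the nominal parameter range. For instance, with $\delta = 8$, $|K| = 8$, $k_u = 7$ (which is a priori admissible at this stage of the argument, since Lemma~\ref{ell at most D}, giving $k_u\leq D_K\leq\ell_K$, is proved only afterward), one has $h(9) = 29/456 \approx 0.0636$ but $h(10) = 43/672 \approx 0.0640 > h(9)$, so $h$ is not decreasing on $[\delta,\infty)$ and the pointwise bound $h(d(u)) \leq h(\delta+1)$ does not follow from monotonicity alone. So you have identified a real soft spot, one the paper's own phrasing glosses over.

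That said, your proposed repair is not clearly the right one. You suggest using the clique constraint $\sum_{w\in N_{G-K}[u]} f(w) \leq 1$ to sharpen the upper bound on $f(u)$, but that constraint bounds a sum of $f$-values and gives an improved bound on $f(u)$ alone only if one has lower bounds on the other $f(w)$ — which the hypotheses of Theorem~\ref{main thm} do not supply (the $f(w)$ could all be $0$). A more promising route would be to tighten the admissible range of $k_u$ (via the later structural Lemma~\ref{ell at most D}, which forces $k_u\leq D_K\leq\ell_K = \delta + 1 - |K|$), or to argue about the sum over $U_K$ rather than term by term; but neither you nor the paper currently carries out the verification rigorously. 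So your proposal faithfully reconstructs the paper's strategy and even surfaces a genuine subtlety the paper underplays, but the monotonicity/optimization step still needs a careful argument before the proof is complete.
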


\begin{proof}
  For convenience, let $\ell = \ell_K$, $U = U_K$, and $A = A_K$.
  Since Lemma~\ref{single lost color lemma} holds for each $v\in K$, the left side of~\eqref{single lost color equation} is at most the average of $\sum_{u\in U}f(u) - \sum_{u\in U\setminus N(v)}f_K(u)$ taken over all $v\in K$.  Therefore
  \begin{equation*}
    \frac{\ell + 1/2}{\delta + 3/2} - \frac{|K|}{(\delta + 3/2)(\delta + 1/2)} < \sum_{u\in U}\left( f(u) - \frac{(|K| - |K\cap N(u)|)f_K(u)}{|K|}\right).
  \end{equation*}
  For each $u\in U$,
  \begin{multline*}
    f(u) - \frac{(|K| - |K\cap N(u)|)f_K(u)}{|K|} \leq \frac{1}{d(u) + 1/2} - \frac{|K| - |K\cap N(u)|}{|K|(d(u) + 1 - |K\cap N(u)|)}\\
    = \frac{|K\cap N(u)|(d(u) + 1/2 - |K|) + |K|/2}{|K|(d(u) + 1 - |K\cap N(u)|)(d(u) + 1/2)}.
  \end{multline*}
  Since the above expression is decreasing as a function of $d(u)$ if $d(u)\geq \delta$, the right side of the above inequality is at most
  \begin{equation*}
    \frac{|K\cap N(u)|(\delta + 3/2 - |K|) + |K|/2}{|K|(\delta + 2 - |K\cap N(u)|)(\delta + 3/2)}
  \end{equation*}
  for each $u\in U$.  Hence
  \begin{equation*}
    \ell + 1/2 - \frac{|K|}{\delta + 1/2} < \sum_{u\in U}\frac{|K\cap N(u)|(\delta + 3/2 -|K|) + |K|/2}{|K|(\delta + 2 - |K\cap N(u)|)},
  \end{equation*}
  as desired.
\end{proof}

\section{Structure around base cliques}\label{base clique structure}

In this section, we prove two important properties of base cliques in Lemmas~\ref{ell not 0} and~\ref{ell at most D}, which we use in the next section to show violate~\eqref{lost color average first inequality}.  The proofs of these lemmas are similar in spirit to Lemmas~\ref{neighborhood demand bound} and~\ref{min degree form clique} in that we demonstrate how an independent set in a ``reduction'' of $G$ can be extended to a larger one in $G$, but the reductions are more involved in this section.  In particular, the reductions of $G$ are no longer induced subgraphs: in Lemma~\ref{ell not 0}, we ``identify'' a pair vertices, and in Lemma~\ref{ell at most D} we add edges.

\begin{lemma}\label{ell not 0}
  If $K$ is a base clique, then $\ell_K > 0$.
\end{lemma}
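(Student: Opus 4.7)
I would argue by contradiction: assume $\ell_K = 0$, so for every $v \in K$ we have $N[v] = K \cup A_K$ and hence $|K \cup A_K| = \delta + 1$. My first step is to apply Lemma~\ref{neighborhood demand bound} to any $v \in K$, obtaining
\[
\sum_{u \in K \cup A_K} f(u) \;=\; f(v) + \sum_{u \in N(v)} f(u) \;>\; 1.
\]
Since $K$ is a clique and $A_K$ is complete to $K$, the set $K \cup A_K$ is itself a clique if and only if $A_K$ is a clique. In the easy case $A_K$ is a clique, so $K \cup A_K$ is a clique of size $\delta + 1$, and the clique hypothesis of Theorem~\ref{main thm} yields $\sum_{u \in K \cup A_K} f(u) \leq 1$, directly contradicting the above.

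In the remaining case, I would pick non-adjacent $y_1, y_2 \in A_K$ and carry out the ``identify a pair of vertices'' reduction hinted at in the introduction to Section~\ref{base clique structure}: form $G'$ from $G$ by identifying $y_1$ and $y_2$ into a single vertex $y^*$, with $V(G') = V(G) \setminus \{y_2\}$, $N_{G'}(y^*) = (N_G(y_1) \cup N_G(y_2)) \setminus \{y_1, y_2\}$, and all other edges inherited from $G$. I would set $f'(z) = f(z)$ for $z \neq y^*$ and choose $f'(y^*)$ as large as the hypotheses of Theorem~\ref{main thm} permit. Checking the hypotheses at $y^*$ requires (i) the degree constraint $f'(y^*) \leq 1/(d_{G'}(y^*) + 1/2) \leq 1/(\delta + 3/2)$, using $d_G(y_i) \geq \delta + 1$ for $y_i \in A_K$; and (ii) the clique constraint for any clique $K' \ni y^*$ in $G'$, which follows by partitioning $K' \setminus \{y^*\}$ into $Z_i = (K' \setminus \{y^*\}) \cap N_G(y_i)$ and applying the clique constraints for $Z_1 \cup \{y_1\}$ and $Z_2 \cup \{y_2\}$ in $G$. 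The minimality of $G$ then gives $\alpha(G') \geq \sum_{z \in V(G')} f'(z)$, and the critical payoff of this identification is that any independent set $I'$ of $G'$ with $y^* \in I'$ lifts to the independent set $(I' \setminus \{y^*\}) \cup \{y_1, y_2\}$ of $G$ of size $|I'| + 1$, because $y_1 \not\sim y_2$ in $G$ and $I' \setminus \{y^*\}$ is disjoint from $N_G(y_1) \cup N_G(y_2)$.

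The main obstacle I expect is calibrating $f'(y^*)$ so that both constraints hold simultaneously while the combined bound $\alpha(G) \geq \alpha(G') + (\text{lift bonus})$ still exceeds $\sum_{v \in V(G)} f(v)$, contradicting the counterexample assumption. The tension is that the degree constraint caps $f'(y^*)$ at $1/(\delta + 3/2)$, strictly below the $f(y_1) + f(y_2)$ I would like to recover, so the $+1$ gain from the lift must cover the shortfall. In the annoying subcase where no maximum independent set of $G'$ contains $y^*$ --- so the lift bonus is zero --- I would fall back on Lemma~\ref{min degree form clique} applied to the non-adjacent pair $(y_1, y_2)$, whose inequality $f(y_1) + f(y_2) + \sum_{u \in N_G(y_1) \cup N_G(y_2)} f(u) > 2$ combined with the inequality $\sum_{u \in K \cup A_K} f(u) > 1$ established above should furnish the missing slack to close the argument.
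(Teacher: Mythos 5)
Your high-level plan --- take a pair of non-adjacent vertices $y_1,y_2 \in A_K$ and collapse them by identification --- is the right instinct, and your observation that $K\cup A_K$ being a clique already gives a contradiction is a correct shortcut past what the paper states as ``$A$ is not a clique.'' But two steps in your execution break, and both are precisely the points the paper's construction is engineered to handle.

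First, your verification of the clique constraint does not work. Suppose $K'\ni y^*$ is a clique in $G'$ and you partition $K'\setminus\{y^*\}$ into $Z_1\subseteq N_G(y_1)$ and $Z_2\subseteq N_G(y_2)$. Applying the clique hypothesis to $Z_1\cup\{y_1\}$ and $Z_2\cup\{y_2\}$ and adding yields
\[
\sum_{v\in K'\setminus\{y^*\}} f(v) \;\leq\; 2 - f(y_1) - f(y_2),
\]
so the bound you obtain on the new clique is $\sum_{v\in K'} f'(v) \leq 2 - f(y_1) - f(y_2) + f'(y^*)$. Since $f(y_1)+f(y_2) \leq 2/(\delta+3/2) < 1$, forcing this to be at most $1$ would require $f'(y^*)<0$. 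So ``choosing $f'(y^*)$ as large as the hypotheses permit'' does not give you the degree bound $1/(d_{G'}(y^*)+1/2)$; the clique constraint actually caps $f'(y^*)$ far below that. The paper's fix is to instead use only \emph{half} the degree bound: with $f'(z)\leq 0.5/(d_{G'}(z)+0.5)$, every vertex $v\in K'\setminus\{z\}$ satisfies $d_G(v)\geq|K'|-1$, so $\sum_{v\in K'\setminus\{z\}} f'(v) \leq (|K'|-1)/(|K'|-0.5)$ while $f'(z)\leq 0.5/(|K'|-0.5)$, and the sum is exactly $1$ in the worst case. Without this halving, your clique check fails.

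Second, and more seriously, your lift gives $\alpha(G)\geq\alpha(G')+1$ only when some maximum independent set of $G'$ contains $y^*$, and you have no control over that. Your fallback --- ``combine Lemma~\ref{min degree form clique} with $\sum_{u\in K\cup A_K}f(u)>1$'' --- is not an argument; both facts are lower bounds on $f$-sums, and I don't see how they produce an upper bound on $\sum_{v\in V(G)}f(v)$ that would contradict $\alpha(G)<\sum f$. Moreover, if no maximum independent set of $G'$ contains $y^*$, your $G'$ is no better than $G-\{y_1,y_2\}$, and the identification bought you nothing. The paper avoids this branching entirely by forming $G'$ from $G-(K\cup A\setminus\{u,w\})$ \emph{before} identifying: since $K$ and the rest of $A$ have been deleted, an independent set $I$ in $G'$ lifts by $+1$ in \emph{both} cases --- if $z\in I$, replace $z$ by $\{u,w\}$; if $z\notin I$, then because $\ell_K=0$ makes $K\cup A$ the entire closed neighborhood of any $v\in K$, you can add any $v\in K$ to $I$. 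Deleting $K$ and $A\setminus\{u,w\}$ also lowers $d_{G'}(z)$ by an extra $|K|$, which is exactly what makes the halved demand $f'(z)=0.5/(d_G(u)+d_G(w)-2|K|+0.5)$ large enough to close the numerical argument via Claim~\ref{sink clique lemma claim}. In short: the identification idea is right, but you must prune $K\cup A\setminus\{u,w\}$ first to guarantee the $+1$ in all cases, and you must halve the demand on the identified vertex to survive the clique constraint.
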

\begin{proof}
  For convenience, let $A = A_K$ and $\ell = \ell_K$.  Suppose to the contrary that $\ell = 0$.  Now $|K\cup A| = \delta + 1$.  By Lemma~\ref{no simplicial vertex}, $A$ is not a clique, so there exists a pair of non-adjacent vertices $u,w\in A$.
  
  Let $G'$ be the graph obtained from $G - (K\cup A\setminus\{u,w\})$ by identifying $u$ and $w$ into a new vertex, say $z$.  Define the function $f' : V(G') \rightarrow \mathbb R$ for $G'$ in the following way.  For each $v\in V(G')\setminus\{z\}$, let $f'(v) = f(v)$, and let $f'(z) = 0.5/(d_G(u) + d_G(w) - 2|K| + 0.5)$.

  Note that for each $v\in V(G')$, we have $f'(v) \leq 1/(d_{G'}(v) + 0.5)$, and moreover, $f'(z) \leq 0.5/(d_{G'}(z) + 0.5)$.
  We claim that for each clique $K'$ in $G'$, we have $\sum_{v\in K'}f'(v) \leq 1$.
  If $z\notin K'$, this inequality holds because $K'$ is a clique in $G$ and $f'(v) = f(v)$ for all $v\in K'$.
  If $z\in K'$, then for each $v\in K'$, we have $d_G(v) \geq |K'| - 1$.  Therefore $f'(v) \leq (|K'| - 0.5)^{-1}$ and $f'(z) \leq 0.5(|K'| - 0.5)^{-1}$, so $\sum_{v\in K'} f'(v) \leq (|K'| - 1)/(|K'| - 0.5) + (1 - 0.5)/(|K'| - 0.5) \leq 1$, as claimed.  Hence, since $G$ is a minimum counterexample, $\alpha(G') \geq \sum_{v\in V(G')}f'(v)$.

  If $I$ is an independent set in $G'$ containing $z$, then $(I\setminus\{z\})\cup \{u, w\}$ is an independent set in $G$ of size $|I| + 1$, and if $I$ is an independent set in $G'$ not containing $z$ and if $v\in K$, then $I\cup\{v\}$ is an independent set in $G$ of size $|I| + 1$.  Therefore $\alpha(G) \geq \alpha(G') + 1$.  Since $G$ is a counterexample, we have $\sum_{v\in V(G)} f(v) > 1 + \sum_{v\in V(G')}f'(v)$, so $\sum_{v\in K\cup A} f(v) - f'(z) > 1$.  Since $f(v) \leq 1/(\delta + 0.5)$ for every $v \in K\cup A$ and $1 - (\delta - 1)/(\delta + 0.5) = 1.5/(\delta + 0.5)$, the previous inequality implies that
  \begin{equation*}
    \frac{1.5}{\delta + 0.5} - f(u) - f(w) + f'(z) < 0.
  \end{equation*}
  This inequality contradicts~\eqref{eq:size-of-K} and the followng claim, which we prove in Appendix~\ref{claim proof section}.
  \begin{claim}\label{sink clique lemma claim}
    Let
    \begin{equation*}
      q_\delta(d_u, d_{w}, k) = \frac{1.5}{\delta + 0.5} - \frac{1}{d_u + 0.5} - \frac{1}{d_{w} + 0.5} + \frac{0.5}{d_u + d_{w} - 2k + 0.5}.
    \end{equation*}
    For $k \geq (\delta + 1)/2$, and $d_u, d_{w} \geq \delta + 1$, we have $q_\delta(d_u, d_{w}, k) \geq 0$.
  \end{claim}
\end{proof}

\begin{lemma}\label{ell at most D}
  If $K$ is a base clique, then $D_K \leq \ell_K \leq (\delta + 1)/2$.  Moreover, $\ell_K \geq 2$ and $\delta \geq 3$.  
\end{lemma}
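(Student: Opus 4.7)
The bound $\ell_K \leq (\delta+1)/2$ is immediate: since $|A_K| \geq 0$ and $|K| \geq (\delta+1)/2$ by~\eqref{eq:size-of-K}, we have $\ell_K = \delta + 1 - |K| - |A_K| \leq (\delta+1)/2$. Once $\ell_K \geq 2$ is established, the bound $\delta \geq 3$ follows by combining the two inequalities: $2 \leq \ell_K \leq (\delta+1)/2$. The two substantive assertions $D_K \leq \ell_K$ and $\ell_K \geq 2$ are both proved by contradiction through reductions in the spirit of Lemma~\ref{ell not 0}: build from $G$ a graph $G'$ on fewer vertices with an accompanying function $f'$ still satisfying the hypotheses of Theorem~\ref{main thm}; apply the minimality of $G$ to deduce $\alpha(G') \geq \sum_{v} f'(v)$; show that every independent set of $G'$ extends by \emph{two} vertices to an independent set of $G$ (leveraging the assumed structure); and then derive a numerical inequality to be refuted in a ``calculus'' claim deferred to Appendix~\ref{claim proof section}.

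For $D_K \leq \ell_K$, suppose $D_K \geq \ell_K + 1$ and pick $u^* \in U_K$ attaining $|K \cap N(u^*)| = D_K$. The boundary case $D_K = |K|$ can be dispatched directly: then $\{u^*\}\cup K$ is a clique in $G$, so the clique-sum condition forces $f(u^*) \leq 1 - \sum_{v \in K} f(v)$, and combining with $|K|\geq (\delta+1)/2$, the minimum-degree bound $f(v) \leq 1/(\delta+1/2)$ for $v \in K$, and the degree lower bound $d(u^*) \geq \delta + 1$ gives a direct arithmetic contradiction. Otherwise $D_K < |K|$, and any $v^* \in K \setminus N(u^*)$ will serve as the partner of $u^*$ in extending independent sets. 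The reduction $G'$ is obtained from $G - (K \cup A_K)$ (which shrinks vertex count by $|K \cup A_K| \geq (\delta+1)/2$) by adding edges incident to $u^*$ (as suggested by the remark ``we add edges'' preceding the lemma), chosen so that $f'$ can be defined to preserve the clique-sum condition despite the new edges. Extending an independent set of $G'$ by $u^*$ and $v^*$ yields $\alpha(G) \geq \alpha(G') + 2$, so combining with the counterexample property produces an inequality in $\delta, |K|, |A_K|, D_K$ that a monotonicity argument shows to fail, given~\eqref{eq:size-of-K} and $D_K \geq \ell_K + 1$.

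For $\ell_K \geq 2$: Lemma~\ref{ell not 0} already excludes $\ell_K = 0$, so it remains to rule out $\ell_K = 1$. Using the just-established $D_K \leq \ell_K = 1$, each vertex of $U_K$ has at most one neighbor in $K$, and a double-count shows the bipartite subgraph between $K$ and $U_K$ is a perfect matching saturating $K$, defining an injection $\phi : K \to U_K$. For any distinct $v_1, v_2 \in K$ the pair $\{v_1, \phi(v_2)\}$ is independent in $G$. By Lemma~\ref{no simplicial vertex} the closed neighborhood of $v_1$ (and hence $\phi(K)$) is not a clique, so I would choose non-adjacent $\phi(v_1), \phi(v_2) \in \phi(K)$ and perform an identification reduction analogous to Lemma~\ref{ell not 0}, possibly combined with deleting $K \cup A_K$; extending an independent set of the reduction by $v_1$ and the identified vertex gives $\alpha(G) \geq \alpha(G') + 2$, leading to another failing numerical inequality.

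The main obstacle in both parts is engineering each reduction so that three things hold simultaneously: the clique-sum condition $\sum_{v \in K'} f'(v) \leq 1$ for \emph{every} clique $K'$ of $G'$ (delicate when edges are added, since new cliques may appear and the weight of any merged or newly-connected vertex must be carefully tuned, as in the definition of $f'(z)$ in Lemma~\ref{ell not 0}); guaranteed two-vertex extensibility of independent sets from $G'$ to $G$; and a final inequality in $\delta, |K|, |A_K|, D_K$ that is genuinely violated, whose verification is deferred to Appendix~\ref{claim proof section} in the style of Claim~\ref{sink clique lemma claim}.
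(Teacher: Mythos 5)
Your handling of the easy parts is fine: $\ell_K \le (\delta+1)/2$ does follow directly from~\eqref{eq:size-of-K}, and $\delta \ge 3$ follows once $\ell_K \ge 2$ is known. And your observation that $D_K = |K|$ is a non-issue is correct, though it is vacuous rather than a "boundary case to dispatch": a vertex with $D_K = |K|$ neighbors in $K$ would be complete to $K$, hence in $A_K$ rather than $U_K$ by definition.

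The core of your plan for $D_K \le \ell_K$ has a genuine gap. You propose deleting $K \cup A_K$, adding edges at $u^*$, and then "extending an independent set of $G'$ by $u^*$ and $v^*$" to obtain $\alpha(G) \ge \alpha(G') + 2$. But $u^*$ survives into $G'$, so an independent set $I$ of $G'$ may already contain $u^*$, in which case you can add at most one further vertex; and if $u^* \notin I$ because $u^*$ has a neighbor in $I$, you cannot add $u^*$ either. There is no clean dichotomy forcing a two-vertex extension here, and no hint of how to tune $f'$ so the new cliques through $u^*$ respect the weight condition. The paper's construction is quite different: it deletes $N[v]\setminus\{u\}$ for a single $v \in K \cap N(u)$ and a fixed $u \in U_K$ with $D_K$ neighbors in $K$, adds edges from $u$ to $U_K \cap N(v')$ for a fixed $v' \in K \setminus N(u)$, and only claims $\alpha(G) \ge \alpha(G') + 1$ (if $u \in I$ add $v'$, else add $v$). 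This $+1$ gain suffices because the deleted set has $f$-weight at most $\delta/(\delta+1/2) < 1$, so $\alpha(G') < \sum_{V(G')} f$ still follows; the function $f$ itself is kept unchanged on $G'$, so no delicate reweighting is needed.

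You also miss what is arguably the key move. After the reduction, the paper does not simply derive "a numerical inequality to be refuted in the appendix." Rather, it uses the minimality of $G$ to conclude that $G'$ with $f$ fails the hypotheses of Theorem~\ref{main thm}, checks that no vertex of $G'$ has its $f$-value exceed $1/(d_{G'}+1/2)$ (this is exactly where the assumption $\ell_K \le D_K$ is used, to show $d_{G'}(u) \le d_G(u)$), and thereby \emph{forces the existence of a heavy clique} $K'$ in $G'$ through $u$. The rest of the proof is a combinatorial analysis of $K'$: bounding its simplicial vertices via Lemma~\ref{demand at least one many min degree lemma}, locating them as neighbors of $v$ or $v'$ via Lemma~\ref{min degree form clique}, pinning down $\ell = D = \delta/2$, $|K'| = \delta+2$, $d(u) = \delta+1$, and finally contradicting Lemma~\ref{min degree form clique} directly. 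None of this is a deferred "calculus" claim.

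Finally, your separate argument for $\ell_K \ge 2$ is unnecessary and again undeveloped. The paper actually proves the strict inequality $D_K < \ell_K$; combined with $U_K \ne \varnothing$ (from Lemma~\ref{ell not 0}), which gives $D_K \ge 1$, the bound $\ell_K \ge 2$ is immediate, and then $\delta = 2$ is impossible since $\ell_K \le (\delta+1)/2$. Proving only the non-strict $D_K \le \ell_K$ and then constructing a second reduction for $\ell_K = 1$ is both extra work and not what the paper does.
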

\begin{proof}
  For convenience, let $U = U_K$, $\ell = \ell_K$, and $D = D_K$.
  First note that by~\eqref{eq:size-of-K}, since $|K| + \ell \leq \delta + 1$, we have $\ell \leq (\delta + 1) / 2$, as desired.

  We actually show $D < \ell$, so suppose for a contradiction that $\ell \leq D$.  We first show that $D = \ell = \delta / 2$.  By Lemma~\ref{ell not 0}, $U \neq \varnothing$, so let $u \in U$ have $D$ neighbors in $K$, let $v \in K\cap N(u)$, and let $v' \in K\setminus N(u)$.  
  
  Let $G'$ be the graph obtained from $G - (N[v]\setminus\{u\})$ by adding an edge between $u$ and each vertex $w\in U\cap N(v')$ if one was not already present.
  Now if $I$ is an independent set in $G'$ containing $u$, then $I \cap N(v') = \varnothing$.  In this case, $I\cup\{v'\}$ is an independent set in $G$ of size $|I| + 1$.  If $I$ is an independent set in $G'$ not containing $u$, then $I \cup \{v\}$ is an independent set in $G$ of size $|I| + 1$.  Thus, $\alpha(G) \geq \alpha(G') + 1$.

  Since $G$ is a counterexample, $\alpha(G')  + 1 < \sum_{v\in V(G)}f(v)$, and since $\sum_{v\in V(G)}f(v) \leq \sum_{v\in V(G')}f(v) + \delta / (\delta + 1/2) \leq \sum_{v\in V(G')}f(v) + 1$, we have $\alpha(G') < \sum_{v\in V(G')} f(v)$.  By the choice of $G$, either there is a vertex $x \in V(G')$ such that $f(x) \leq 1/(d_{G'}(x) + 1/2)$, or there is a clique $K'$ in $G'$ such that $\sum_{x\in K'} f(x) > 1$.  Since $\ell \leq D$, we have $d_{G'}(u) \leq d_G(u)$, and for every vertex $w \in U \cap N(v')$, we also have $d_{G'}(w) \leq d_G(w)$.  Every vertex in $G'$ not $u$ or adjacent to $v'$ has the same neighborhood in $G$ as it does in $G'$.  Therefore every $x \in V(G')$ satisfies $d_{G'}(x) \leq d_G(x)$, so $f(x) \leq 1/(d_{G'}(x) + 1/2)$.  It follows that there is a clique $K'$ in $G'$ such that $\sum_{x\in K'}f(x) > 1$, and this clique contains $u$ and at least one neighbor of $v'$.

  Let $W = \{w \in K' : d_{G'}(w) = |K'| - 1\}$, that is the vertices of $K'$ that are simplicial in $G'$.  By Lemma~\ref{demand at least one many min degree lemma} applied with $X = K'$, we have $|K'\setminus W| < (|K'| + 1/2)/2$, so $|W| > (|K'| - 1/2)/2$.  By Lemma~\ref{min degree form clique}, $d_G(u) \geq \delta + 1$, so $|K'| \geq \delta + 2$.  Therefore $|W| > (|K'| - 1/2)/2 \geq (\delta + 3/2)/2$.  Since $|W|$ is an integer, $|W| \geq \delta / 2 + 1$.  By Lemma~\ref{min degree form clique}, if $w \in W$, then $w$ is adjacent to either $v$ or $v'$, because otherwise $d(w) \leq d(x)$ for all $x \in N(w)$ and $w$ shares $u$ as a neighbor with $v$ in $G$.  Thus, $|W| \leq \ell + 1$, and since $|W| \geq \delta / 2 + 1$, we have $\ell \geq \delta / 2$.

  If $\ell = (\delta + 1)/2$, then $|K| = (\delta + 1)/2$, and since $D < |K|$, we have $D < \ell$, a contradiction.  Therefore $\ell = \delta/2$, as claimed.  Moreover, $|K| \leq \delta/2 + 1$, and since $D \leq |K| - 1$, we have $D \leq \delta / 2$.  Since $D \geq \ell$, we also have $D = \delta/2$, as claimed.

  Now we have shown $|W| \geq \delta / 2 + 1$ and $|W| \leq \ell + 1 = \delta / 2 + 1$, so $|W| = \delta / 2 + 1$.  Moreover, we have $|K'| \geq \delta + 1$ and $|K'| < 2|W| + 1/2 = \delta + 5/2$, and since $|K'|$ is an integer, we have $|K'| \leq \delta + 2$, so $|K'| = \delta + 2$.  Therefore $d(u) = \delta + 1$, and $N(v') \cap U \subseteq W$.  Now $u$ and $v'$ are non-adjacent vertices such that
  \begin{multline*}
    f(u) + f(v') + \sum_{x\in N(u) \cup N(v')}f(x) \leq \sum_{x\in K}f(x) + \sum_{x\in K'}f(x) \leq \frac{\delta/2 + 1}{\delta + 1/2} + \frac{\delta + 2}{\delta + 3/2}\\
    = 2 - \frac{2\delta^2 - \delta - 4}{(2\delta + 1)(2\delta + 3)} < 2,
  \end{multline*}
  contradicting Lemma~\ref{min degree form clique}.  Therefore $D < \ell$, as desired.  Moreover, if $\delta = 2$, then $\ell = D = 1$, a contradiction.  Thus, $\delta \geq 3$, as desired.  By Lemma~\ref{ell not 0}, $D \geq 1$, so $\ell \geq 2$, as desired.
\end{proof}

\section{Proof of Theorem~\ref{main thm}}
\label{main proof section}

In this section we prove Theorem~\ref{main thm}.  The result follows easily by combining Lemmas~\ref{lost color average} and~\ref{ell at most D} with the following lemma.

\begin{lemma}\label{finishing blow}
  Let $G$ be a graph of minimum degree $\delta$ with base clique $K$.  If $D_K\leq \ell_K \leq (\delta + 1)/2$, $\ell_K \geq 2$, and $\delta \geq 3$, then
  \begin{equation*}
    \ell_K + 1/2 - \frac{|K|}{\delta + 1/2} \geq \sum_{u\in U_K}\frac{|K\cap N(u)|(\delta + 3/2 -|K|) + |K|/2}{|K|(\delta + 2 - |K\cap N(u)|)}.
  \end{equation*}
  That is,~\eqref{lost color average first inequality} does not hold.
\end{lemma}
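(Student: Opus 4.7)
Write $s=|K|$, $\ell=\ell_K$, $D=D_K$, and $k_u=|K\cap N(u)|$ for $u\in U_K$. Two elementary facts organize the proof. First, by double-counting edges between $K$ and $U_K$ (each of the $s$ vertices of $K$ has exactly $\ell$ neighbors in $U_K$, by the definition of $\ell_K$),
\[
\sum_{u\in U_K}k_u \;=\; s\ell.
\]
Second, each $k_u$ lies in $\{1,\ldots,D\}$, with $D\le \ell$ by hypothesis and $D\le s-1$ since every $u\in U_K$ is non-adjacent to at least one vertex of $K$ (otherwise $u$ would lie in $A_K$). Define
\[
g(x)=\frac{x(\delta+3/2-s)+s/2}{s(\delta+2-x)},
\]
so the right-hand side of the target inequality is $\sum_{u\in U_K}g(k_u)$.

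Letting $a=\delta+3/2-s$, $b=s/2$, and $d=\delta+2$, a direct computation gives $g'(x)=(ad+b)/(s(d-x)^2)$ and $g''(x)=2(ad+b)/(s(d-x)^3)$. Because $\ell\ge 2$ and $s+\ell\le\delta+1$ force $s\le\delta-1$, one has $a\ge 5/2$, so $ad+b>0$, and hence $g$ is convex and increasing on $[1,D]$. Assuming $D\ge 2$ (the case $D=1$ is immediate, since then every $k_u=1$), convexity yields the chord bound
\[
g(k_u)\;\le\;\frac{(D-k_u)\,g(1)+(k_u-1)\,g(D)}{D-1}
\]
for each $u\in U_K$. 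Summing and using $\sum_u k_u=s\ell$ produces an upper bound on $\sum_u g(k_u)$ that is linear in $|U_K|$; since $|U_K|$ ranges over $[s\ell/D,\,s\ell]$, its maximum is attained at an endpoint, corresponding to ``all $k_u=1$'' or ``all $k_u=D$''. Thus
\[
\sum_{u\in U_K}g(k_u)\;\le\;\max\Bigl(s\ell\cdot g(1),\ \tfrac{s\ell}{D}\cdot g(D)\Bigr).
\]

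It then suffices to verify the two inequalities $\ell+\tfrac12-\tfrac{s}{\delta+1/2}\ge s\ell\cdot g(1)$ and $\ell+\tfrac12-\tfrac{s}{\delta+1/2}\ge \tfrac{s\ell}{D}\,g(D)$. After clearing denominators, each reduces to a polynomial inequality in $(\delta,s,\ell)$ (respectively $(\delta,s,\ell,D)$) under the constraints $\delta\ge 3$, $\ell\ge 2$, $(\delta+1)/2\le s$, $s+\ell\le\delta+1$, and $1\le D\le\min(\ell,s-1)$. A natural approach is to show that the margin $\mathrm{LHS}-\mathrm{RHS}$ is monotone (or jointly concave) in the free parameters, thereby reducing to boundary configurations such as $s=(\delta+1)/2$, $s+\ell=\delta+1$, and $D=\min(\ell,s-1)$, and then verifying the resulting univariate inequality in $\delta$ directly.

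\textbf{Main obstacle.} The target inequality is asymptotically sharp: in the regime $s\approx\ell\approx D\approx\delta/2$, both sides grow like $\delta/2$ with only $O(1)$ margin. Consequently, one cannot afford crude pointwise bounds such as $g(k_u)\le g(D)$; for instance at $\delta=4,\,s=3,\,\ell=2,\,D=2$, that bound already gives $2$, which exceeds $\mathrm{LHS}=11/6$. The combination of convexity with optimization over $|U_K|$ is precisely what produces a bound tight enough to close the gap. The remaining polynomial verification is routine but tedious, and is naturally deferred, along with Claim~\ref{sink clique lemma claim}, to the appendix.
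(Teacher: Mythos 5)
Your proposal is correct and follows essentially the same route as the paper: both exploit the convexity of $g(x)=\frac{x(\delta+3/2-|K|)+|K|/2}{|K|(\delta+2-x)}$ to push the values $|K\cap N(u)|$ to extremal configurations (the paper via the discrete exchange argument of Lemma~\ref{lost color average implication}, you via chord bounds plus optimizing over $|U_K|$), and then reduce to polynomial inequalities at the boundary values of $D$. The deferred verifications are indeed true and essentially coincide with the paper's Claim~\ref{ell at most D average color}: since $g(D)/D$ is unimodal (decreasing then increasing) its maximum over $[1,\min(\ell,|K|-1)]$ is at an endpoint, your endpoint $D=\ell$ gives exactly $q_\delta(\ell,k)\ge 0$, the exceptional endpoint $D=(\delta-1)/2$ arising when $\ell=|K|=(\delta+1)/2$ is the second part of that Claim without the ceiling, and the $D=1$ case reduces to $\frac{\ell(|K|-1)}{2(\delta+1)}+\frac12-\frac{|K|}{\delta+1/2}\ge 0$, which holds for $\ell\ge 2$ and $|K|\le\delta-1$.
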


First, we need the following lemma which provides a bound on the right side of the inequality~\eqref{lost color average first inequality}.
\begin{lemma}\label{lost color average implication}
  If $K$ is a base clique, $D'$ is an integer such that $D' \geq D_K$, and $r$ is the smallest positive integer such that $\ell_K|K| \equiv r \mod D'$, then 
  \begin{multline*}
    \sum_{u\in U_K}\frac{|K\cap N(u)|(\delta + 3/2 -|K|) + |K|/2}{|K|(\delta + 2 - |K\cap N(u)|)} \\
    \leq \left\lfloor \frac{\ell|K|}{D'}\right\rfloor\frac{D'(\delta + 3/2 - |K|) + |K|/2}{|K|(\delta + 2 - D')} + \frac{r(\delta + 3/2 - |K|) + |K|/2}{|K|(\delta + 2 - r)}.
  \end{multline*}
\end{lemma}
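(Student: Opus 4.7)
The natural first step is to translate the inequality into a bounded integer optimization. Setting $x_u := |K \cap N(u)|$ for each $u \in U_K$, double-counting edges between $K$ and $U_K$ (each vertex of the base clique has exactly $\ell_K$ neighbors in $U_K$) yields $\sum_u x_u = \ell_K |K| =: S$, with each $x_u$ a positive integer bounded by $D_K \leq D'$. Writing
\[
 g(x) := \frac{x(\delta + 3/2 - |K|) + |K|/2}{|K|(\delta + 2 - x)},
\]
the claim becomes $\sum_u g(x_u) \leq \lfloor S/D' \rfloor g(D') + g(r)$, which I would prove by bounding $\sum g(x_u)$ via the value attained at a canonical extremal configuration.

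A direct differentiation shows that $g$ is strictly convex and strictly increasing on $[0, \delta + 2)$ (its numerator is linear in $x$, its denominator is a positive linear function of $\delta + 2 - x$, and one checks $g''>0$). By this convexity, whenever $x_i \leq x_j$ with $x_i \geq 2$ and $x_j \leq D' - 1$, replacing $(x_i, x_j)$ by $(x_i - 1, x_j + 1)$ does not decrease $\sum_u g(x_u)$ — this is the standard majorization/rearrangement principle for convex functions, using that the discrete differences $g(x+1) - g(x)$ are nondecreasing. Iterating such swaps, one reduces to a configuration in which at most one $x_u$ lies strictly between $1$ and $D'$, with all others equal to $1$ or $D'$.

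The next step is to argue that this reduced multiset is in turn dominated by the canonical configuration with $\lfloor S/D' \rfloor$ copies of $D'$ and one value $r' := S - \lfloor S/D' \rfloor D'$ (when $r' > 0$; when $r' = 0$ the canonical configuration is $\lfloor S/D' \rfloor$ copies of $D'$ and the bound is loose by $g(D')$). This reduces to verifying scalar inequalities of the form $g(a) + g(b) \leq g(a+b-c) + g(c)$ for specific integer parameters, which after clearing denominators become polynomial inequalities in $\delta$, $|K|$, and the indices. Using $|K| \geq (\delta + 1)/2$ from~\eqref{eq:size-of-K} (and, in the relevant application, $D_K < \ell_K \leq (\delta + 1)/2$), each such polynomial inequality can be checked by direct algebra — precisely the style of computation the authors relegate to Appendix~\ref{claim proof section}.

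The main obstacle is the case analysis rather than any single clever trick: there are several residual configurations to rule out (depending on the sign of $r'$, on whether a transitional middle value survives the rearrangement, and on whether consolidation moves that merge $1$'s into $r$ or $D'$ remain sum-increasing), and in each one has to pair the surviving multiset against the target bound with the correct polynomial inequality. Convexity-based rearrangement carries one most of the way, and the technical calculus inequalities close the remaining gap.
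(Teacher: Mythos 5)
Your setup and your first phase coincide with the paper's own proof: the paper introduces exactly the multivariate sum $g$, proves the same switching inequality (moving a unit of mass from a smaller coordinate to a larger one does not decrease the sum, which is your convexity observation), and then declares the sum maximized at the configuration with $\lfloor \ell_K|K|/D'\rfloor$ copies of $D'$ and one copy of $r$. So in outline you are reproducing the paper's argument.

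The gap is in the consolidation step, and it is a genuine one. Swaps $(x_i,x_j)\mapsto(x_i-1,x_j+1)$ preserve the \emph{number} of summands, and every summand is bounded below by $g(0)=\tfrac{1}{2(\delta+2)}>0$ (by $g(1)$ if you keep coordinates positive). Starting from $|U_K|$ summands with total mass $S=\ell_K|K|$, the rearrangement therefore terminates at $\lfloor S/D'\rfloor$ copies of $g(D')$, one copy of $g(S\bmod D')$, \emph{plus} $|U_K|-\lfloor S/D'\rfloor-1$ leftover terms each worth at least $\tfrac{1}{2(\delta+2)}$; these leftovers do not appear on the right-hand side of the lemma. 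Your scalar inequalities $g(a)+g(b)\leq g(a+b-c)+g(c)$ also preserve the term count, so they cannot discard these terms. To absorb a coordinate outright you would need something like $D'g(1)\leq g(D')$, which convexity does not give (it yields only $D'g(1)\leq g(D')+(D'-1)g(0)$) and which can fail for admissible parameters: with $|K|=\ell_K=(\delta+1)/2$ one computes $2g(1)-g(2)=\tfrac{\delta^2-4\delta-9}{2\delta(\delta+1)^2}>0$ for $\delta\geq 6$, so taking every $|K\cap N(u)|=1$ and $D'=2$ makes the stated bound fail. Be aware that the paper's proof elides exactly this point — it asserts that $g$ at the canonical configuration ``is the right side,'' silently discarding the $\ell_K|K|-\lfloor \ell_K|K|/D'\rfloor-1$ zero coordinates, each contributing $\tfrac{1}{2(\delta+2)}$ — so either the lemma needs that extra additive term on its right-hand side (with downstream adjustments in Lemma~\ref{finishing blow}), or further structural information about $U_K$ must be invoked. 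As written, your plan does not close this gap.
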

\begin{proof}
  For convenience, let $\ell = \ell_K$ and $D = D_K$.  
  Let 
  \begin{equation*}
    g(x_1, \dots, x_{\ell|K|}) = \sum_{i=1}^{\ell|K|}\frac{x_i(\delta + 3/2 - |K|) + |K|/2}{|K|(\delta + 2 - x_i)}.
  \end{equation*}
  Note that if $x_2 \geq x_1$, then
  \begin{multline*}
    g(x_1 - 1, x_2 + 1, x_3, \dots, x_{\ell|K|}) - g(x_1, \dots, x_{\ell|K|}) =\\
    \frac{(\delta + 2)(\delta + 3/2 - |K|)}{|K|}\left(\frac{1}{(\delta + 2 - x_2)(\delta + 1 - x_2)} - \frac{1}{(\delta + 3 - x_1)(\delta + 2 - x_1)}\right) > 0,
  \end{multline*}
  Therefore if $1 \leq x_1 \leq x_2 \leq D' - 1$, then
  \begin{equation}\label{switching x_is}
    g(x_1 - 1, x_2 + 1, x_3, \dots, x_{\ell|K|}) > g(x_1, \dots, x_{\ell|K|}).
  \end{equation}
  Now let
  \begin{equation*}
    x_i = \left\{
      \begin{array}{l l}
        D' & i\in\{1, \dots, \lfloor \ell|K|/D'\rfloor\},\\
        r & i = \lfloor \ell|K|/D'\rfloor + 1,\\
        0 & \text{otherwise}.
      \end{array}\right.
  \end{equation*}             
  By~\eqref{switching x_is}, since $\sum_{u\in U}|K\cap N(u)| = \ell |K|$, the left side of the desired inequality is at most $g(x_1, \dots, x_{\ell|K|})$, which is the right side, so the result follows.
\end{proof}

We also need the following claim, proved in Appendix~\ref{claim proof section}.

\begin{claim}\label{ell at most D average color}
  Let
  \begin{equation*}
    q_\delta(\ell, k) = \ell + 0.5 - \frac{k}{\delta + 0.5} - \frac{\ell(\delta + 1.5 - k) + 0.5k}{\delta + 2 - \ell}.
  \end{equation*}
  If $\ell \in [2, \delta/2]$, $k \geq (\delta + 1)/2$, and $\delta \geq 4$, then $q_\delta(\ell, k) \geq 0$.  Also, if $\ell = k = (\delta + 1)/2$ and $\delta \geq 5$ is an odd integer, then
  \begin{equation*}
    \ell + 0.5 - \frac{k}{\delta + 0.5} - \left\lceil\frac{\ell k}{0.5(\delta - 1)}\right\rceil \left(\frac{0.5(\delta - 1)(\delta + 1.5 - k) + 0.5k}{k(\delta + 2 - 0.5(\delta - 1))}\right) > 0.
  \end{equation*}
\end{claim}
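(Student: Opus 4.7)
The claim is purely analytic, so the plan is to handle each of the two inequalities by first using monotonicity/concavity to reduce to boundary cases, then verifying those by direct algebraic computation.

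\emph{First inequality.} First I would show that $q_\delta$ is increasing in $k$ when $\ell \geq 2$. A routine partial-derivative computation yields
\[
  \frac{\partial q_\delta}{\partial k} = -\frac{1}{\delta + 1/2} + \frac{\ell - 1/2}{\delta + 2 - \ell} = \frac{(\ell - 3/2)(\delta + 3/2)}{(\delta + 1/2)(\delta + 2 - \ell)},
\]
which is positive for $\ell \geq 2$. Thus it suffices to verify $q_\delta(\ell, (\delta + 1)/2) \geq 0$. Next, I would show that $q_\delta(\ell,k)$ is concave in $\ell$: after rearrangement, one obtains
\[
  \frac{\partial^2 q_\delta}{\partial \ell^2} = -\frac{2(\delta + 3/2)(\delta + 2 - k)}{(\delta + 2 - \ell)^3},
\]
which is negative for $k < \delta + 2$ (in particular at $k = (\delta + 1)/2$). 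Concavity implies the minimum of $q_\delta(\ell, (\delta+1)/2)$ over $\ell \in [2, \delta/2]$ is attained at an endpoint, so the task reduces to verifying the two inequalities $q_\delta(2, (\delta + 1)/2) \geq 0$ and $q_\delta(\delta/2, (\delta + 1)/2) \geq 0$ for $\delta \geq 4$. After clearing denominators, each becomes a quadratic polynomial inequality in $\delta$ (namely, $6\delta^2 - 17\delta - 9 \geq 0$ and $2\delta^2 - 2\delta - 5 \geq 0$), both of whose positive roots lie below $4$, so both hold for $\delta \geq 4$.

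\emph{Second inequality.} Setting $m = (\delta - 1)/2$, so that $\delta = 2m + 1$ is odd with $m \geq 2$ and $\ell = k = m + 1$, I would first note $\ell k / (0.5(\delta - 1)) = (m+1)^2/m = m + 2 + 1/m$, so $\lceil \ell k / (0.5(\delta - 1)) \rceil = m + 3$. Direct substitution and simplification (using $m^2 + 2m + 1/2 = (m+1)^2 - 1/2$ in the numerator of the bracketed fraction and cancelling the factor $m+3$) collapses the entire expression to
\[
  \frac{1}{2}\left(\frac{1}{m+1} - \frac{1}{4m + 3}\right) = \frac{3m + 2}{2(m+1)(4m + 3)},
\]
which is manifestly positive for $m \geq 2$.

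The main (mild) obstacle is bookkeeping: the expressions are algebraically unwieldy and one must choose common denominators carefully to see the cancellations. No deeper combinatorial idea is required beyond the monotonicity/concavity reduction in the first part and the integer substitution $m = (\delta-1)/2$ in the second.
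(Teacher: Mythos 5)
Your proposal is correct and follows essentially the same route as the paper's proof: reduce to $k=(\delta+1)/2$ by monotonicity in $k$, reduce to the endpoints $\ell\in\{2,\delta/2\}$ by a calculus argument in $\ell$, verify the two resulting quadratic inequalities in $\delta$, and for the second inequality substitute $m=(\delta-1)/2$ (so the ceiling equals $m+3$) and simplify to a manifestly positive rational function. Your use of concavity ($\partial^2 q_\delta/\partial\ell^2<0$) is a mild streamlining of the paper's first-derivative sign analysis via the quadratic formula, and your exact quadratics $6\delta^2-17\delta-9$ and $2\delta^2-2\delta-5$ (with positive roots below $4$) check out against the paper's approximate factorizations.
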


Now we can prove Lemma~\ref{finishing blow}.

\begin{proof}[Proof of Lemma~\ref{finishing blow}]
  For convenience, let $\ell = \ell_K$, $D = D_K$, and $U = U_K$.
  Suppose for a contradiction that the inequality does not hold.
  Thus, since $D \leq \ell$, by Lemma~\ref{lost color average implication} (with $D' = \ell$), we have
  \begin{equation*}
    \ell + 1/2 - \frac{|K|}{\delta + 1/2} < \frac{\ell(\delta + 3/2 - |K|) + |K|/2}{\delta + 2 - \ell}.
  \end{equation*}

    Note that the difference of the left and right side of the above inequality is $q_\delta(\ell, |K|)$ from Claim~\ref{ell at most D average color}.  Hence, by Claim~\ref{ell at most D average color}, we may assume either $\delta \leq 3$ or $\ell > \delta/2$.

  First, suppose $\delta \leq 3$.  Hence, $\delta = 3$, $\ell = 2$, $|K| = 2$, and $D = 1$.  Now the right side of~\eqref{lost color average first inequality} is $1.75$ and the left side is $2.5 - 2/3.5 = 27/14$, a contradiction.

  Therefore we may assume $\delta \geq 4$ and $\ell > \delta/2$, and thus, since $\ell$ is an integer at most $(\delta + 1)/2$, we have $\ell = (\delta + 1)/2$.  Hence, $|K| = (\delta + 1)/2$ and $D \leq (\delta - 1)/2$.  Moreover, $\delta$ is odd, and since $\delta \geq 4$, we have $\delta \geq 5$.  Therefore by Lemma~\ref{lost color average implication} applied with $D' = (\delta - 1)/2$, we have
  \begin{equation*}
    \ell + 0.5 - \frac{|K|}{\delta + 0.5} < \left\lceil\frac{\ell |K|}{0.5(\delta - 1)}\right\rceil \left(\frac{0.5(\delta - 1)(\delta + 1.5 - |K|) + 0.5|K|}{|K|(\delta + 2 - 0.5(\delta - 1))}\right), 
  \end{equation*}
  contradicting Claim~\ref{ell at most D average color}.
\end{proof}

As mentioned, the proof of Theorem~\ref{main thm} follows easily from Lemmas~\ref{lost color average},~\ref{ell at most D}, and~\ref{finishing blow}.

\appendix
\section{Strong Tur\'{a}n stability}\label{strong turan section}

\begin{proof}[Proof of Corollary~\ref{strong turan stability}]
  Suppose for the sake of contradiction that $G$ is a $K_{r + 1}$-free graph on $n$ vertices with chromatic number at least $r + 1$ and at least $(1 - 1/r)n^2/2 - n/(2r) + 1$ edges.  Note that the case $r \leq 1$ is vacuous, so we assume $r \geq 2$.

  Since $r \geq 2$, by Theorem~\ref{strong local turan stability} with $\sigma = 1/2$, $G$ contains an independent set $I$ with a vertex complete to $G - I$.  Let $I_1, \dots, I_k$ be a set of pairwise disjoint independent sets in $G$ chosen with $k$ maximal subject to for each $i\in\{1, \dots, k\}$, the independent set $I_i$ contains a vertex $v_i$ complete to $G - \cup_{j=1}^i I_j$, let $G' = G - \cup_{i=1}^k I_i$, and let $n'$ be the number of vertices of $G'$.  If $K$ is a clique in $G'$, then $K\cup\{v_1, \dots, v_k\}$ is a clique in $G$, so $G'$ is $K_{r + 1 - k}$-free.
  Thus, if $k = r - 1$, then $V(G')$ is an independent set and $G$ is $r$-colorable, a contradiction.  Therefore we assume $k \leq r - 2$.  Since $k\geq 1$, we have $r \geq 3$.

  Therefore by Theorem~\ref{strong local turan stability} and the maximality of $k$, we have $|E(G')| \leq (1 - 1/(r - k))n'^2/2 - n'/4$, and by Tur\'{a}n's Theorem, $|E(G[\cup_{i=1}^k I_i])| \leq (1 - 1/k)(n - n')^2/2$.  Thus, since the number of edges between $\cup_{i=1}^k I_i$ and $V(G')$ is at most $(n - n')n'$, we have 
  \begin{equation}\label{stability proof edge bound}
    |E(G)| \leq \left(1 - \frac{1}{k}\right)\frac{(n - n')^2}{2} + \left(1 - \frac{1}{r - k}\right)\frac{n'^2}{2} - \frac{n'}{4} + (n - n')n'.
  \end{equation}
  
  The remainder of the proof amounts to solving a simple optimization problem -- we derive a contradiction by showing that the right side of~\eqref{stability proof edge bound} is at most $(1 - 1/r)n^2/2 - n/(2r) + 1$ for $n' \leq n$ and $k \leq r - 2$.  To that end, let $e_{n,r}(n', k)$ denote the right side of~\eqref{stability proof edge bound} and treat $n'$ and $k$ as real-valued variables.
  Note that
  \begin{equation*}
    \frac{\partial}{\partial n'}e_{n, r}(n', k) = -\left(1 - \frac{1}{k}\right)(n - n') + n - 2n' + \left(1 - \frac{1}{r - k}\right)n' - \frac{1}{4} = \frac{n}{k} - \frac{1}{4} - n'\left(\frac{1}{k} + \frac{1}{r-k}\right),
  \end{equation*}
  and thus $e_{n, r}(n', k)$ is maximized when
  \begin{equation*}
    n' = \frac{r - k}{r}\left(n - \frac{k}{4}\right).
  \end{equation*}
  Hence, it suffices to show that $e_{n, r}((r - k)(n - k/4)/r, k) \leq (1 - 1/r)n^2/2 - n/(2r) + 1$ for $k \leq r - 2$, and this inequality holds for $r > 0$ so long as
  \begin{equation*}
    k^2 - 8kn - kr + 8nr - 16n + 32r > 0.
  \end{equation*}
  The left side of the above inequality is minimized when $k = 4n + r/2$, and since $k \leq r - 2 \leq 4n + r/2$, we only need that the above inequality holds for $k = r - 2$.  Indeed,
  \begin{equation*}
    (r - 2)^2 - 8(r - 2)n - (r - 2)r + 8nr - 16n + 32r = 30r + 4 > 0,
  \end{equation*}
  as required.  
\end{proof}

\section{Proving the claims}\label{claim proof section}

In this section we prove the various claims that appear throughout the paper.

\begin{proof}[Proof of Claim~\ref{sink clique lemma claim}]
  First observe that $q_\delta(d_u, d_w, k)$ is increasing in $k$, so it suffices to show that $q'_\delta(d_u, d_w) = q_\delta(d_u, d_w, (\delta + 1)/2) \geq 0$.  Note that
  \begin{equation*}
    \frac{\partial}{\partial d_u}q'_\delta(d_u, d_w) = \frac{1}{(d_u + 0.5)^2} - \frac{0.5}{(d_u + d_w - (\delta + 1) + 0.5)^2}.
  \end{equation*}
  Since $d_w \geq \delta + 1$, the right side of the equality above is at least $1/(d_u + 0.5)^2 - 0.5/(d_u + 0.5)^2 > 0$.  Therefore $q'_\delta(d_u, d_w)$ is increasing as a function of $d_u$, and likewise for $d_w$ by symmetry.  Since $d_u, d_w \geq \delta + 1$, we have $q'_\delta(d_u, d_w) \geq q'_\delta(\delta + 1, \delta + 1)$, and
  \begin{equation*}
    q'(\delta + 1, \delta + 1) = \frac{1.5}{\delta + 0.5} - \frac{1}{\delta + 1.5} - \frac{1}{\delta + 1.5} + \frac{0.5}{\delta + 1.5} = \frac{1.5}{\delta + 0.5} - \frac{1.5}{\delta + 1.5} > 0,
  \end{equation*}
  as desired.
\end{proof}

\begin{proof}[Proof of Claim~\ref{ell at most D average color}]
  First we show that $q_\delta(\ell, k) \geq 0$.
  Note that
  \begin{equation*}
    \frac{\partial}{\partial k}q_\delta(\ell, k) = \frac{-1}{\delta + 0.5} + \frac{\ell - 0.5}{\delta + 2 - \ell} \geq \frac{-1}{\delta + 0.5} + \frac{1.5}{\delta} > 0.
  \end{equation*}
  Therefore $q_\delta(\ell, k) \geq q_\delta(\ell, (\delta + 1)/2)$.  For convenience, let $q'_\delta(\ell) = q_\delta(\ell, (\delta + 1)/2))$, and note that
  \begin{equation*}
    \frac{\partial}{\partial \ell}q'_\delta(\ell) = 1 - \frac{(0.5\delta + 1)(\delta + 2) + .25(\delta + 1)}{(\delta + 2 - \ell)^2}.
  \end{equation*}
  Hence, $\frac{\partial}{\partial \ell}q'_\delta(\ell) \geq 0$ if and only if
  \begin{equation*}
    \ell^2 - 2(\delta + 2)\ell + (\delta + 2)^2 - ((0.5\delta + 1)(\delta + 2) + .25(\delta + 1)) \geq 0.
  \end{equation*}
  By the quadratic formula applied to $\ell$, $\frac{\partial}{\partial \ell}q'_\delta(\ell) \leq 0$ if and only if
  \begin{multline*}
    \frac{1}{2}\left(2(\delta + 2) - \sqrt{(2(\delta + 2))^2 - 4((\delta + 2)^2 - ((0.5\delta + 1)(\delta + 2) + .25(\delta + 1)))}\right) \leq \ell \\
    \leq \frac{1}{2}\left(2(\delta + 2) + \sqrt{(2(\delta + 2))^2 - 4((\delta + 2)^2 - ((0.5\delta + 1)(\delta + 2) + .25(\delta + 1)))}\right).
  \end{multline*}
  Note that
  \begin{equation*}
    (2(\delta + 2))^2 - 4((\delta + 2)^2 - ((0.5\delta + 1)(\delta + 2) + .25(\delta + 1))) = 2\delta^2 + 9\delta + 9 \geq 0.
  \end{equation*}
  Therefore $\frac{\partial}{\partial \ell}q'_\delta(\ell) \leq 0$ if
  \begin{equation*}
    \delta + 2 - 0.5\sqrt{2\delta^2 + 9\delta + 9} \leq \ell \leq \delta + 2
  \end{equation*}
  and $\frac{\partial}{\partial \ell}q'_\delta(\ell) \geq 0$ if $\ell \leq \delta + 2 - 0.5\sqrt{2\delta^2 + 9\delta + 0}$.  
  Therefore $q'_\delta(\ell) \geq \min\{q'_\delta(2), q'_\delta(\delta/2)\}$.  Note that
  \begin{equation*}    
    q'_\delta(2) = -\frac{1.25   \delta + 2.25}{\delta} + \frac{-0.5   \delta - 0.5}{\delta + 0.5} + 2.5 \approx \frac{(\delta - 3.28935) (\delta + 0.456017)}{\delta (\delta + 0.5)} >0,
  \end{equation*}
  and
  \begin{multline*}
    q'_\delta(\delta/2) = 0.5   \delta - \frac{\delta {\left(0.25   \delta + 0.5\right)} + 0.25  \delta + 0.25}{0.5   \delta + 2} + \frac{-0.5   \delta - 0.5}{\delta + 0.5} + 0.5 \\
    \approx \frac{0.5 (\delta - 2.15831) (\delta + 1.15831)}{(\delta + 0.5) (\delta + 4)}>0,
  \end{multline*}
  as desired.

  Now we show the second inequality.
  Since $\delta \geq 5$, we have $4/(2(\delta - 1)) \leq 1/2$, so
  \begin{equation*}
    \left\lceil\frac{\ell k}{0.5(\delta - 1)}\right\rceil = \left\lceil \frac{(\delta + 1)^2}{2(\delta - 1)}\right\rceil  = \left\lceil \frac{(\delta + 3)(\delta - 1) + 4}{2(\delta - 1)}\right\rceil \leq \left\lceil 0.5(\delta + 3) + 0.5 \right\rceil = 0.5(\delta + 5).
  \end{equation*}
  Therefore
  \begin{multline*}
    \ell + 0.5 - \frac{k}{\delta + 0.5} - \left\lceil\frac{\ell k}{0.5(\delta - 1)}\right\rceil\left(\frac{0.5(\delta - 1)(\delta + 1.5 - k) + 0.5k}{k(\delta + 2 - 0.5(\delta - 1)}\right) \geq\\
    0.5(\delta + 1) + 0.5 - \frac{0.5(\delta + 1)}{\delta + 0.5} - 0.5(\delta + 5)\left(\frac{0.5(\delta - 1)(\delta + 1.5 - 0.5(\delta + 1)) + .25(\delta + 1)}{0.5(\delta + 1)(\delta + 2 - 0.5(\delta - 1))}\right)\\
    = \frac{0.75 (\delta + 1/3)}{(\delta + 0.5) (\delta + 1)} > 0,
  \end{multline*}
  as desired.
\end{proof}


\begin{thebibliography}{10}

\bibitem{AKS80}
M.~Ajtai, J.~Koml\'{o}s, and E.~Szemer\'{e}di.
\newblock A note on {R}amsey numbers.
\newblock {\em J. Combin. Theory Ser. A}, 29(3):354--360, 1980.

\bibitem{BGHR12}
P.~Borowiecki, F.~G\"{o}ring, J.~Harant, and D.~Rautenbach.
\newblock The potential of greed for independence.
\newblock {\em J. Graph Theory}, 71(3):245--259, 2012.

\bibitem{BR15}
P.~Borowiecki and D.~Rautenbach.
\newblock New potential functions for greedy independence and coloring.
\newblock {\em Discrete Appl. Math.}, 182:61--72, 2015.

\bibitem{BJ08}
N.~Bougard and G.~Joret.
\newblock Tur{\'a}n's theorem and k-connected graphs.
\newblock {\em J. Graph Theory}, 58(1):1--13, 2008.

\bibitem{CRRS16}
C.~Brause, B.~Randerath, D.~Rautenbach, and I.~Schiermeyer.
\newblock A lower bound on the independence number of a graph in terms of
  degrees and local clique sizes.
\newblock {\em Discrete Appl. Math.}, 209:59--67, 2016.

\bibitem{B41}
R.~L. Brooks.
\newblock On colouring the nodes of a network.
\newblock {\em Proc. Cambridge Philos. Soc.}, 37:194--197, 1941.

\bibitem{Br81}
A.~E. Brouwer.
\newblock {\em Some lotto numbers from an extension of {T}ur\'{a}n's theorem},
  volume 152 of {\em Afdeling Zuivere Wiskunde [Department of Pure
  Mathematics]}.
\newblock Mathematisch Centrum, Amsterdam, 1981.

\bibitem{C79}
Y.~Caro.
\newblock New results on the independence number.
\newblock Technical report, Tel Aviv University, 1979.

\bibitem{E66}
P.~{Erd\H os}.
\newblock On some new inequalities concerning extremal properties of graphs.
\newblock In {\em Theory of Graphs (Proc. Colloq., Tihany, 1966)}, pages
  77--81, 1966.

\bibitem{E67}
P.~{Erd\H os}.
\newblock Some recent results on extremal problems in graph theory.
\newblock {\em Theory of Graphs (Internat. Sympos., Rome, 1966)}, pages
  117--123, 1967.

\bibitem{F15}
Z.~F{\"u}redi.
\newblock {A proof of the stability of extremal graphs, Simonovits' stability
  from Szemer{\'e}di's regularity}.
\newblock {\em J. Combin. Theory Ser. B}, 115:66--71, 2015.

\bibitem{GV10}
I.~Gitler and C.~E. Valencia.
\newblock On bounds for some graph invariants.
\newblock {\em Bol. Soc. Mat. Mexicana (3)}, 16:73--94, 2010.

\bibitem{G83}
J.~R. Griggs.
\newblock Lower bounds on the independence number in terms of the degrees.
\newblock {\em J. Combin. Theory Ser. B}, 34(1):22--39, 1983.

\bibitem{HM19}
J.~Harant and S.~Mohr.
\newblock On {Selkow's} bound on the independence number of graphs.
\newblock {\em Discuss. Math. Graph Theory}, 39(3):655--657, 2019.

\bibitem{HR11}
J.~Harant and D.~Rautenbach.
\newblock Independence in connected graphs.
\newblock {\em Discrete Appl. Math.}, 159(1):79--86, 2011.

\bibitem{HS01}
J.~Harant and I.~Schiermeyer.
\newblock On the independence number of a graph in terms of order and size.
\newblock {\em Discrete Math.}, 232(1-3):131--138, 2001.

\bibitem{HS06}
J.~Harant and I.~Schiermeyer.
\newblock A lower bound on the independence number of a graph in terms of
  degrees.
\newblock {\em Discuss. Math. Graph Theory}, 26(3):431--437, 2006.

\bibitem{KP19-brooks}
T.~Kelly and L.~Postle.
\newblock The local fractional {Brooks'} theorem.
\newblock submitted.

\bibitem{KP18}
T.~Kelly and L.~Postle.
\newblock Fractional coloring with local demands.
\newblock {\em arXiv:1811.11806}, 2018.

\bibitem{N11}
V.~Nikiforov.
\newblock Some new results in extremal graph theory.
\newblock In {\em Surveys in combinatorics 2011}, volume 392 of {\em London
  Math. Soc. Lecture Note Ser.}, pages 141--181. Cambridge Univ. Press,
  Cambridge, 2011.

\bibitem{NR04}
V.~Nikiforov and C.~Rousseau.
\newblock Large generalized books are p-good.
\newblock {\em J. Combin. Theory Ser. B}, 92(1):85--97, 2004.

\bibitem{PSS18}
K.~Popielarz, J.~Sahasrabudhe, and R.~Snyder.
\newblock A stability theorem for maximal {$K_{r+1}$}-free graphs.
\newblock {\em J. Combin. Theory Ser. B}, 132:236--257, 2018.

\bibitem{S94}
S.~M. Selkow.
\newblock A probabilistic lower bound on the independence number of graphs.
\newblock {\em Discrete Math.}, 132(1-3):363--365, 1994.

\bibitem{Sh83}
J.~B. Shearer.
\newblock A note on the independence number of triangle-free graphs.
\newblock {\em Discrete Math.}, 46(1):83--87, 1983.

\bibitem{Sh91}
J.~B. Shearer.
\newblock A note on the independence number of triangle-free graphs. {II}.
\newblock {\em J. Combin. Theory Ser. B}, 53(2):300--307, 1991.

\bibitem{S66}
M.~Simonovits.
\newblock A method for solving extremal problems in graph theory, stability
  problems.
\newblock In {\em Theory of Graphs (Proc. Colloq., Tihany, 1966)}, pages
  279--319, 1966.

\bibitem{T41}
P.~Tur\'{a}n.
\newblock Eine {E}xtremalaufgabe aus der {G}raphentheorie.
\newblock {\em Mat. Fiz. Lapok}, 48:436--452, 1941.

\bibitem{TU15}
M.~Tyomkyn and A.~J. Uzzell.
\newblock Strong {T}ur{\'{a}}n stability.
\newblock {\em Electron. J. Comb.}, 22(3):P3.9, 2015.

\bibitem{W81}
V.~K. Wei.
\newblock A lower bound on the stability number of a simple graph.
\newblock Technical Memorandum 81-11217-9, Bell Laboratories, 1981.

\end{thebibliography}
\end{document}